\documentclass[onefignum,onetabnum]{siamart171218}
\usepackage[left=2.9cm,right=2.9cm]{geometry}
\usepackage[utf8]{inputenc}
\DeclareUnicodeCharacter{FF0E}{.}

\usepackage{xcolor}

\newcommand\norm[1]{\left\lVert#1\right\rVert}
\newcommand\abs[1]{\left|#1\right|}
\newcommand{\op}{\overline{\partial}}
\newcommand{\Span}{\mbox{Span}\,}
\newcommand*\diff{\mathop{}\!\mathrm{d}}
\newcommand{\R}{\mathbb{R}}

\usepackage{lipsum}
\usepackage{amsfonts}
\usepackage{graphicx}
\usepackage{epstopdf}
\usepackage{algorithmic}
\usepackage{cleveref}

\ifpdf
  \DeclareGraphicsExtensions{.eps,.pdf,.png,.jpg}
\else
  \DeclareGraphicsExtensions{.eps}
\fi

\newsiamremark{remark}{Remark}
\newsiamremark{hypothesis}{Hypothesis}
\crefname{hypothesis}{Hypothesis}{Hypotheses}
\newsiamthm{claim}{Claim}

\headers{PARAMETRIC POD-ROM SENSITIVITY FOR G-EQUATION MODELS}{S. MA, L. XUE, AND Z. ZHANG}

\title{Parametric Sensitivity of POD Reduced-Order Models for Semilinear Evolution Equations with Applications to G-Equations\thanks{Submitted to the editors DATE.
\funding{S. Ma was supported by the HKU Presidential PhD Scholar Programme. L. Xue was supported by the Hong Kong PhD Fellowship Scheme. Z. Zhang was partially supported by the National Natural Science Foundation of China (Projects 92470103 and 12171406), the Hong Kong RGC grant (projects 17304324 and 17300325), the Seed Funding Programme for Basic Research (HKU), and the Hong Kong RGC Research Fellow Scheme 2025.}}}

\author{
  Shengbo Ma\thanks{Department of Mathematics, The University of Hong Kong, Hong Kong, China.  \href{mailto:u3011677@connect.hku.hk}{\texttt{u3011677@connect.hku.hk}}}
  \and
  Luhao Xue\thanks{Corresponding author. Department of Mathematics, The University of Hong Kong, Hong Kong, China.  \href{mailto:luhxue214@connect.hku.hk}{\texttt{luhxue214@connect.hku.hk}}}
  \and
  Zhiwen Zhang\thanks{Corresponding author. Department of Mathematics, The University of Hong Kong, Hong Kong, China. Materials Innovation Institute for Life Sciences and Energy (MILES), HKU-SIRI, Shenzhen, 518045, P.R. China. zhangzw@hku.hk. \href{mailto:zhangzw@hku.hk}{\texttt{zhangzw@hku.hk}}}
}

\usepackage{amsopn}

\makeatletter
\newcommand*{\addFileDependency}[1]{
  \typeout{(#1)}
  \@addtofilelist{#1}
  \IfFileExists{#1}{}{\typeout{No file #1.}}
}
\makeatother

\begin{document}

\maketitle

\begin{abstract}
Proper Orthogonal Decomposition (POD) provides low-dimensional surrogate models of evolution equations from solution snapshots. In parameterized problems, however, a basis computed at one parameter value need not remain accurate at another, while recomputing the basis for every parameter in a many-query study is costly. We develop a sensitivity analysis for POD reduced-order models of a class of parameterized semilinear evolution equations that includes the viscous G-equation and a viscous strain G-equation in their mean-free formulations. The analysis is based on a cross-space Lipschitz condition for the nonlinear operator, which accommodates the nonsmooth, gradient-dependent nonlinearities of these flame-propagation models, and on continuity moduli quantifying the parameter dependence of the bilinear form and of the nonlinearity. We prove that when a POD basis constructed at a reference parameter is applied to nearby query parameters, the resulting error variation is controlled by the corresponding parameter modulus, with constants independent of the POD dimension, the number of time steps, and the perturbation magnitude. Numerical experiments are consistent with the predicted modulus-dependent sensitivity behavior, and illustrate the practical robustness of basis reuse for nearby parameters.

\end{abstract}

\begin{keywords}
  Proper orthogonal decomposition method, viscous G-equation, strain G-equation, Hamilton-Jacobi type equation, convergence analysis.
\end{keywords}

\begin{AMS}
  35K58, 65M15, 65M60, 70H20, 80A25.
\end{AMS}

\section{Introduction}

Proper Orthogonal Decomposition (POD) is a widely used model reduction technique for partial differential equations, due to its ability to extract dominant structures from high-fidelity solution data and construct accurate low-dimensional surrogates at substantially reduced computational cost \cite{Atwell2004,BerkoozHolmesLumley1993,BORGGAARD2011269,gunzburger2017ensemble,kunisch1999control,kunisch2001galerkin,ravindran2000reduced,Sirovich1987,volkwein2013proper,wang2019proper,zhang2025novel,ma2023quasi,wang2023data}. In this work, we study the parametric sensitivity of POD reduced-order models for a class of parameterized semilinear evolution equations. More precisely, we consider systems that can be written in the abstract form \eqref{eq:intro_abstract}, which encompasses problems ranging from the linear convection--diffusion equation to nonlinear flame-front models of Hamilton--Jacobi type:
\begin{equation}
\label{eq:intro_abstract}
\langle \partial_t u(t), \psi \rangle_{V^*,V} + a(u(t),\psi;\mu) + \langle F(u(t);\mu),\psi\rangle_H
= \langle f(t),\psi\rangle_H, \qquad \forall \psi \in V, \ \text{for a.e. } t\in(0,T).
\end{equation}
Here \(V\hookrightarrow H\cong H^*\hookrightarrow V^*\) is a
Gelfand triple, with \(V\) densely and continuously embedded in \(H\);
\(\mu\in \mathcal P\subset(0,\infty)\) is the parameter of interest, \(a(\cdot,\cdot;\mu):V\times V\to\mathbb R\) is a parameter-dependent bilinear
form, \(F( \cdot ;\mu):V\to H\) is a parameter-dependent nonlinear operator,
and \(f\in C([0,T];H)\) is a prescribed forcing term.

For a fixed parameter value, POD reduced-order models have been extensively studied and have proved effective in reducing the computational complexity of large-scale dynamical systems \cite{Atwell2004,kunisch2001galerkin,kunisch2002galerkin,kunisch2004hjb,volkwein2013proper,WU2024115659}. The parametric setting, however, poses additional challenges. In many applications, such as uncertainty quantification \cite{babuska2007stochastic,MA2020112635,nobile2008sparse,xiu2005high} and digital twins \cite{crespi2023digital,nasem2024foundational,xiu2025computational}, the same model must be solved repeatedly for a range of different parameter values, and repeated high-fidelity simulations are often costly \cite{Amsallem2008,BennerGugercinWillcox2015}. Reliable reduced-order approximations are therefore essential. Since a standard POD basis is constructed from solution snapshots collected at a single reference parameter, the resulting reduced space depends on that reference value through the snapshot ensemble. When the parameter varies, the accuracy of the reduced space may therefore deteriorate, while recomputing a new POD basis for every parameter value is itself computationally expensive. Hence, a fundamental question arises regarding the generalizability of a POD basis for a specific reference parameter: does it still yield accurate approximations under parameter perturbations, and can the resulting degradation be bounded quantitatively in terms of the parameter deviation?

Answering this question has implications beyond the reuse of a reference basis. Indeed, rigorous a priori sensitivity information of this form may serve as an indicator for greedy enrichment, adaptive sampling, and parameter-local basis libraries \cite{Akkari2014,Eftang2010,Grepl_Patera_2005,Haasdonk2011}.
While such parametric sensitivity has been studied for equations with relatively standard nonlinear structures, such as the Navier--Stokes and Burgers equations \cite{AKKARI2014951,AKKARI2014522,Akkari2014}, the corresponding theory for semilinear evolution equations with gradient-dependent nonlinearities remains largely underdeveloped. Consequently, the mathematical justification for basis reuse is still incomplete for this important class of systems.

A primary physical motivation for addressing this theoretical gap comes from the viscous G-equation in turbulent combustion \cite{Osher2004, peters2000turbulent, Jackxin2009}. This equation involves a nonlinearity that depends explicitly on the gradient of the solution and thus fits naturally into the class of problems considered here. Recent POD-based computations for the viscous G-equation have shown that reduced-order modeling can yield substantial computational savings for fixed parameter values \cite{gu2021error}. While this foundational work provided a priori error estimates, the resulting bounds involve constants that depend on the dimension of the POD space. The analysis developed in our paper not only extends these results to the parametric sensitivity setting but also removes this dimension dependence.

Beyond this primary motivation, the abstract mathematical formulation in this work is not restricted to the viscous G-equation alone. It also covers simpler benchmark problems such as the convection--diffusion equation, and, as a further illustration, a viscous strain G-equation \cite{LIU201320,Xin2014FrontQI} obtained by linearizing the curvature term in the original strain model. To our knowledge, rigorous reduced-order analysis for the original strain G-equation remains limited, largely due to the complexity of its curvature term. Nevertheless, the viscous strain model considered here retains the nonlinear, strain-induced nonconvex gradient dependence of the original formulation, showing that the present framework can also accommodate more structured and complex gradient-dependent effects.

The main objective of this paper is to develop a sensitivity analysis for POD reduced-order models associated with the abstract evolution equation \eqref{eq:intro_abstract}, thereby providing a priori justification for basis reuse. A key point of our analysis is the use of a cross-space Lipschitz condition for the nonlinear operator \(F\), which is tailored to gradient-dependent nonlinearities. In addition, the dependence on the parameter is described through a continuity modulus $\rho_{\mathcal P}$ for both the bilinear form and the nonlinear operator. Under these assumptions, we prove that a POD basis constructed at a reference parameter \(\mu_0\) can be reused at a query parameter \(\mu_1\), with the resulting error controlled by the reference-parameter POD error and a parameter-sensitivity term governed by \(\rho_{\mathcal P}(|\mu_1-\mu_0|)\); see Theorem~\ref{thm:main theorem}. The constants in this estimate are independent of the POD dimension $\ell$, the number of time steps $m$, and the perturbation magnitude $|\mu_1- \mu_0|$. The estimate shows that a reference POD basis can be reused locally when the reference-parameter POD error is small and the parameter modulus is small. In this sense, the result provides a reliability criterion for local basis reuse in many-query parameter studies.

To assess how well the estimate reflects the observed parameter sensitivity, we present numerical experiments for several representative problems within our framework, including the benchmark convection--diffusion equation under Dirichlet boundary conditions, as well as the viscous and viscous strain G-equations under periodic boundary conditions and in the mean-free formulation. The diffusion and Markstein-number perturbation tests
exhibit the expected Lipschitz-type behavior. We also include a flow-amplitude test in the viscous G-equation, where a non-viscosity parameter enters through a nonlinear parameterization. These numerical tests are consistent with the modulus-dependent sensitivity behavior predicted by the theory and support the robustness of basis reuse in nearby parameter regimes.

The rest of the paper is organized as follows. \Cref{Problem Settings} presents the problem settings, including the assumptions on the bilinear and nonlinear terms and the construction of the POD reduced-order space. The main theoretical results are given in \Cref{sec:error_and_sensitivity}, where
parameter sensitivity estimates are combined with POD error estimates to derive the modulus-controlled error bound for query parameters. \Cref{section: numerical part} presents numerical experiments illustrating the parameter-sensitivity behavior predicted by the theoretical estimates. Finally, \Cref{sec:conclusions} provides conclusions and discusses potential future extensions. Technical proofs, including the coercivity of the bilinear forms, the cross-space Lipschitz and parameter-continuity estimates for the nonlinear operators, are deferred to \Cref{section: Verification}.

\section{Problem Settings}
In this section, we establish the mathematical framework for our sensitivity analysis of POD.  
\label{Problem Settings}

\subsection{Problem Formulation}
\label{subsec:problem_formulation}

Let \(V\) and \(H\) be real, separable Hilbert spaces with \(V\) densely and compactly embedded in \(H\). Identifying \(H\) with its dual \(H^*\), we obtain the Gelfand triple 
\[
    V \hookrightarrow H \cong H^* \hookrightarrow V^*,
\]
where all embeddings are continuous and dense. We denote the inner products on \(H\) and \(V\) by
\(\langle\cdot,\cdot\rangle_H\) and
\(\langle\cdot,\cdot\rangle_V\), respectively, with induced norms
\(\norm{\cdot}_H\) and \(\norm{\cdot}_V\). The duality pairing between
\(V^*\) and \(V\) is denoted by
\(\langle\cdot,\cdot\rangle_{V^*,V}\). The continuous embedding
\(V\hookrightarrow H\) implies that there exists a constant \(\alpha>0\) such
that
\begin{equation}
    \norm{\varphi}_H^2
    \le
    \alpha \norm{\varphi}_V^2,
    \qquad \forall \varphi\in V .
\end{equation}

For a fixed time horizon \(T>0\), we define
\[
    L^2(0,T;V)
    :=
    \left\{
        u:[0,T]\to V\ \text{measurable}:
        \int_0^T \norm{u(t)}_V^2 \diff t <\infty
    \right\},
\]
equipped with the norm
\[
    \norm{u}_{L^2(0,T;V)}
    :=
    \left(
        \int_0^T \norm{u(t)}_V^2 \diff t
    \right)^{1/2}.
\]
We also introduce the evolution space
\[
    W(0,T;V)
    :=
    \left\{
        u\in L^2(0,T;V):
        \partial_t u\in L^2(0,T;V^*)
    \right\},
\]
where the time derivative is understood in the distributional sense. For
\(u\in W(0,T;V)\), the standard energy identity reads
\begin{equation}
    \langle \partial_t u(t),u(t)\rangle_{V^*,V}
    =
    \frac12\frac{\diff}{\diff t}\norm{u(t)}_H^2
    \quad
    \text{for a.e. } t\in(0,T).
\end{equation}

We fix a compact admissible parameter set $\mathcal P \subset (0,\infty)$. For notational simplicity, the parameter is taken to be scalar; the same argument extends to compact subsets of $\mathbb R^{d}$, up to notational changes. For each \(\mu\in\mathcal P\), we consider the following semilinear evolution problem: find \(u_\mu\in W(0,T;V)\) such that
\begin{subequations}\label{PDE:nonlinear}
    \begin{align}
        \langle \partial_t u_\mu(t),\psi\rangle_{V^*,V}
        +
        a(u_\mu(t),\psi;\mu)
        +
        \langle F(u_\mu(t);\mu),\psi\rangle_H
        &=
        \langle f(t),\psi\rangle_H,
        \quad
        \forall \psi\in V,\  \text{for a.e. }t\in(0,T),
        \\
        u_\mu(0)&=\phi .
    \end{align}
\end{subequations}
Here \(f\in C([0,T];H)\) is a prescribed forcing term, \(\phi \in V\) is the initial datum, \(a:V\times V\times \mathbb R_{>0}\to\mathbb R\) is a parameter-dependent bilinear form, and \(F:V\times \mathbb R_{>0}\to H\) is a nonlinear operator.

For the bilinear form \(a\), we assume that there exist continuous functions
    $\beta:\mathcal P\to (0,\infty),
    \kappa:\mathcal P\to (0,\infty),$
and a modulus of continuity $\omega_a^{\mathcal P}:[0,\infty)\to[0,\infty)$,
such that, for all \(\mu,\nu\in\mathcal P\) and all
\(\varphi,\psi\in V\), the following conditions hold.
\begin{itemize}
    \item[(A1)] \textbf{Continuity:}
    \begin{equation}
        \abs{a(\varphi,\psi;\mu)}
        \le
        \beta(\mu)\norm{\varphi}_V\norm{\psi}_V .
    \end{equation}

    \item[(A2)] \textbf{Coercivity:}
    \begin{equation}
        a(\varphi,\varphi;\mu)
        \ge
        \kappa(\mu)\norm{\varphi}_V^2 .
    \end{equation}

    \item[(A3)] \textbf{Parameter continuity:}
    \begin{equation}
        \abs{a(\varphi,\psi;\mu)-a(\varphi,\psi;\nu)}
        \le
        \omega_a^{\mathcal P}(\abs{\mu-\nu})
        \norm{\varphi}_V\norm{\psi}_V .
    \end{equation}
\end{itemize}
Here a modulus of continuity means a nondecreasing function
\(\omega:[0,\infty)\to[0,\infty)\) satisfying
\[
    \omega(0)=0,
    \qquad
    \lim_{r\to 0}\omega(r)=0.
\]

For the nonlinear operator \(F:V\times \mathbb R_{>0}\to H\),  we assume that the following conditions hold.

\begin{itemize}
    \item[(F1)] \textbf{Cross-space Lipschitz continuity in the state variable:}
    There exists a continuous function
    \(L_F:\mathcal P\to[0,\infty)\) such that, for every
    \(\mu\in\mathcal P\) and all \(u,v\in V\),
    \begin{equation}\label{lipschitz in function}
        \norm{F(u;\mu)-F(v;\mu)}_H
        \le
        L_F(\mu)\norm{u-v}_V .
    \end{equation}

    \item[(F2)] \textbf{Parameter continuity:}
    There exists a modulus of continuity
    \(\omega_F^{\mathcal P}:[0,\infty)\to[0,\infty)\) such that,
    for every \(\mu,\nu\in\mathcal P\) and every \(u\in V\),
    \begin{equation}\label{modulus in parameter}
        \norm{F(u;\mu)-F(u;\nu)}_H
        \le
        \omega_F^{\mathcal P}(\abs{\mu-\nu})
        \left(\norm{u}_V+1\right).
    \end{equation}
\end{itemize}

The constant \(+1\) in \eqref{modulus in parameter} allows the nonlinear operator to depend on the parameter even at the zero state. In particular, the framework can accommodate parameter-dependent lower-order terms or source-type contributions for which \(F(0;\mu)\) varies with \(\mu\).

Combining (F1) and (F2), we obtain, for all \(\mu,\nu\in\mathcal P\) and
all \(u,v\in V\),
\begin{equation}\label{lipschitz combined}
    \norm{F(u;\mu)-F(v;\nu)}_H
    \le
    \omega_F^{\mathcal P}(\abs{\mu-\nu})(\norm{u}_V+1)
    +
    L_F(\nu)\norm{u-v}_V .
\end{equation}

For every \(\mu\in\mathcal P\), we assume that problem
\eqref{PDE:nonlinear} admits a unique solution
\(u_\mu\in W(0,T;V)\cap C([0,T];H)\) with
\(\partial_t u_\mu\in L^2(0,T;V^*)\). For the subsequent sensitivity analysis, we further assume higher regularity, specifically that $u_\mu \in W^{2,2}(0,T;H) \cap C([0,T];V)$. Under this regularity assumption, \(\partial_t u_\mu\in C([0,T];H)\), and hence the duality pairing in \eqref{PDE:nonlinear} is identified with the \(H\)-inner product in the estimates below. Such regularity is natural for viscous equations and is standard in POD-based analyses; see, e.g., $\cite{gu2021error,kunisch2001galerkin,kunisch2002galerkin}$.

\begin{remark}
Condition (F1) employs a cross-space  Lipschitz formulation that is mathematically weaker than the standard Lipschitz requirements typically found in the literature. Specifically, condition (F1) bounds the $H$-norm of the output difference $\|F(u;\mu) - F(v;\mu)\|_H$ by the $V$-norm of the input difference $\|u - v\|_V$. This relaxed requirement is specifically designed for our target applications: it naturally accommodates the nonsmooth and gradient-dependent nonlinearities inherent in the viscous and strain G-equations. This avoids the overly restrictive global bounds that would be required if both norms were taken in the same functional space (e.g., from $H^1$ to $H^1$ or $L^2$ to $L^2$).
\end{remark}

\begin{remark}
Furthermore, regarding the coercivity assumption (A2), while it is naturally satisfied under homogeneous Dirichlet boundary conditions, we emphasize that for systems equipped with periodic boundary conditions, such as the G-equations considered in our applications, the standard bilinear form associated with the diffusion operator lacks coercivity over the entire $H^1(\Omega)$ space due to the null space of constant functions. By employing the mean-free treatment, the functional space is properly restricted to the subspace $\mathring{V} = \{ \psi \in H_{per}^1(\Omega) : \int_\Omega \psi \diff x = 0 \}$. Over this specific subspace, the Poincar\'e inequality holds, which provides the strict mathematical justification for the coercivity condition (A2) required by our framework. The mean component can then be recovered from the mean-free component through the corresponding spatial averaging relation. Thus, restricting the analysis to the mean-free subspace isolates the coercive part of the problem without creating an additional closure difficulty for the reduced-order model.

\end{remark}

\begin{remark}
As a final note on the problem formulation, we emphasize that our assumptions accommodate the highly nonlinear physical models motivating this study. The mathematical verification demonstrating that both the viscous G-equation and the viscous strain G-equation strictly satisfy the abstract conditions (A1)--(A3) and (F1)--(F2) can be found in \Cref{section: Verification}.
\end{remark}

\subsection{Construction of POD Basis}\label{subsection:general PODconstruction}

We employ the Proper Orthogonal Decomposition (POD) method to construct a reduced-order model for the nonlinear PDE \eqref{PDE:nonlinear}. The key idea is to extract the dominant modes from solution snapshots and project the dynamics onto a low-dimensional subspace that captures the system's essential features.

Fix a reference parameter \(\mu_0\in\mathcal P\), and let
\(u_{\mu_0}\) denote the corresponding solution of \eqref{PDE:nonlinear}.
The POD basis is constructed from snapshots of \(u_{\mu_0}\). Consider the set of numerical solutions $\{u_{\mu_0}(t_k)\}_{k=0}^m$ evaluated at discrete time instances $t_k = k\Delta t$, where $\Delta t = T/m$ denotes the time step size (which will be chosen sufficiently small in subsequent analysis). To capture both the solution behavior and its temporal evolution, we augment the snapshot set with discrete time derivatives.  This enriched snapshot ensemble ensures a better approximation of the dynamics. Specifically, we utilize \(2m+1\) snapshots defined as follows:
\[
y_j = u_{\mu_0}(t_j), \quad j = 0, \dots, m,
\]
and
\[
y_j = \op u_{\mu_0}(t_{j-m}), \quad j = m+1, \dots, 2m,
\]
where \(\op u_{\mu_0}(t_i) = (u_{\mu_0}(t_i) - u_{\mu_0}(t_{i-1}))/\Delta t\) denotes the discrete time derivative. We define the set \(S := \{y_i \mid i = 0, \dots, 2m\}\) and assume the linear span of \(S\) has dimension \(n\). Let $S^\ell = \{\psi_1(\cdot), \dots, \psi_\ell(\cdot)\}$ denote the $\ell$-dimensional orthonormal POD basis functions obtained by solving the minimization problem:
\begin{equation}\label{eqn:PODerror generalcase}
    \min_{\psi_1,\ldots,\psi_\ell\in V}
    \frac1{2m+1}\sum_{j=0}^{2m}
    \left\|
        y_j-\sum_{k=1}^\ell \langle y_j,\psi_k\rangle_V\psi_k
    \right\|_V^2
    \quad
    \text{s.t. }
    \langle \psi_i,\psi_j\rangle_V=\delta_{ij},
\end{equation}
where $\ell \in \{1, \dots, n\}$. To solve \eqref{eqn:PODerror generalcase}, we consider the eigenvalue problem
\[
Kv=\lambda v,
\]
where $K\in\R^{{2m+1}\times {2m+1}}$ and $K_{ij}=\frac{1}{2m+1}\langle y_i,y_j\rangle_{V}$ is the snapshot correlation matrix. Let \( v_{k}, k = 1, \ldots, 2m+1 \), be the eigenvectors and \( \lambda_{1} \geq \lambda_{2} \geq \cdots \geq \lambda_{n} > 0 \) be the positive eigenvalues. It has been shown in \cite{Sirovich1987,volkwein2013proper} that the solution of the optimization problem \eqref{eqn:PODerror generalcase} is explicitly given by

\begin{equation}
\psi_{k} = \frac{1}{\sqrt{(2m+1)\lambda_{k}}} \sum_{j=0}^{2m} (v_{k})_{j} y_j, \quad 1 \leq k \leq \ell.
\end{equation}

Moreover, the corresponding minimum projection error is exactly quantified by the tail of the eigenvalue spectrum
\begin{equation}
    \frac1{2m+1}\sum_{j=0}^{2m}
    \left\|
        y_j-\sum_{k=1}^\ell \langle y_j,\psi_k\rangle_V\psi_k
    \right\|_V^2
    =
    \sum_{k=\ell+1}^n\lambda_k .
\end{equation}
With the basis functions established, we define the POD reduced-order space as $V^\ell = \Span\{\psi_1, \dots, \psi_\ell\}$.

In practice, the dimension $\ell$ of the reduced-order space is determined by the decay rate of the correlation matrix eigenvalues. A standard criterion is to choose the smallest integer $\ell$ such that the relative projection error is bounded by a predefined tolerance $\varepsilon > 0$, namely,
\begin{equation}
\frac{\sum_{k=\ell+1}^n \lambda_k}{\sum_{k=1}^n \lambda_k} \leq \varepsilon.
\end{equation}

\subsection{Backward Euler-Galerkin Discretization}

We now introduce the fully discrete scheme using a Backward Euler temporal discretization combined with Galerkin projection onto the POD space. For a parameter \(\mu\in\mathcal P\), we seek
\(\{U_k^\mu\}_{k=0}^m\subset V^\ell\) of the form
\[
    U_k^\mu=\sum_{j=1}^\ell c_j^k(\mu)\psi_j,
    \qquad
    \mathbf c^k(\mu)
    :=
    \big(c_1^k(\mu),\ldots,c_\ell^k(\mu)\big)^\top
    \in\mathbb R^\ell .
\]
The Backward Euler-Galerkin scheme reads: for \(k=1,\ldots,m\), find
\(U_k^\mu\in V^\ell\) such that
\begin{subequations}\label{eqn:POD nonlinear}
    \begin{align}
    \label{eqn:POD nonlinear a}
        \left\langle
            {\op U_k^\mu},
            \psi
        \right\rangle_H
        +
        a(U_k^\mu,\psi;\mu)
        +
        \langle F(U_k^\mu;\mu),\psi\rangle_H
        &=
        \langle f(t_k),\psi\rangle_H,
        \quad \forall \psi\in V^\ell, \\
        \label{eqn:POD nonlinear b}
        \langle U_0^\mu,\psi\rangle_H
        &=
        \langle \phi,\psi\rangle_H,
        \quad \forall \psi\in V^\ell .
    \end{align}
\end{subequations}
where $\op U_k^\mu = (U_k^\mu - U_{k-1}^\mu)/\Delta t$ denotes the discrete backward difference operator for $k = 1, \dots, m$. Equation \eqref{eqn:POD nonlinear b} implies that the initial condition $U_0^\mu$ is precisely the $H$-orthogonal projection of the exact initial data $\phi$ onto the reduced-order space $V^\ell$. By substituting $U_k^\mu = \sum_{i=1}^\ell c_i^k(\mu)\psi_i$ into \eqref{eqn:POD nonlinear a} and taking the test functions as $\psi = \psi_i$ for $i=1,\dots,\ell$, the fully discrete scheme translates into the following implicit system of nonlinear algebraic equations for the coefficient vector \(\mathbf c^k(\mu)\):
\begin{equation}\label{eqn:matrix_form}
    \frac{1}{\Delta t} M \big(\mathbf c^k(\mu)-\mathbf c^{k-1}(\mu)\big) + A(\mu) \mathbf{c}^k(\mu) + \mathbf{F}(\mathbf{c}^k(\mu); \mu) = \mathbf{f}(t_k),
\end{equation}
where $M \in \R^{\ell \times \ell}$ is the mass matrix with entries $M_{ij} = \langle \psi_j, \psi_i \rangle_H$, $A(\mu) \in \R^{\ell \times \ell}$ is the parameter-dependent stiffness matrix with $A_{ij}(\mu) = a(\psi_j, \psi_i; \mu)$, $\mathbf{F} \in \R^\ell$ is the nonlinear vector with components $\mathbf{F}_i = \langle F(U_k^\mu; \mu), \psi_i \rangle_H$, and $\mathbf{f}(t_k)_i = \langle f(t_k), \psi_i \rangle_H$. At each time step, this low-dimensional nonlinear system can be efficiently solved using standard iterative algorithms, such as Newton’s method or Picard iteration.

\section{Error and Sensitivity Analysis of the Parameterized POD-ROM}
\label{sec:error_and_sensitivity}

In this section, we establish the error and sensitivity estimates for the
parameterized POD reduced-order model. The objective is to quantify the error
incurred when a POD space constructed at a reference parameter is reused to
approximate the solution at a different parameter.

Let \(\mu_0,\mu_1\in\mathcal P\), where \(\mu_0\) is the reference parameter
used to generate the POD snapshots and \(\mu_1\) is the query parameter. Set $\delta:=|\mu_1-\mu_0|$. For notational simplicity, we write $u_i(t):=u_{\mu_i}(t), i=0,1,$ where \(u_i\) denotes the exact solution of \eqref{PDE:nonlinear} at the
parameter \(\mu_i\).

Let $S^\ell=\{\psi_1,\ldots,\psi_\ell\}$ be the \(V\)-orthonormal POD basis constructed from the snapshots associated
with the reference parameter \(\mu_0\), as described in
Section \ref{subsection:general PODconstruction}. The corresponding reduced space is
$    V^\ell:=\operatorname{span}S^\ell
    =
    \operatorname{span}\{\psi_1,\ldots,\psi_\ell\}$. For \(i=0,1\), let \(\{U_k^i\}_{k=0}^m\subset V^\ell\) denote the
Backward Euler--Galerkin POD solution computed at parameter \(\mu_i\) in the
same reduced space \(V^\ell\). It satisfies, for \(k=1,\ldots,m\),
\begin{subequations}\label{eqn:POD_scheme_mu_i}
    \begin{align}
        \langle \op U_k^i,\psi\rangle_H
        +
        a(U_k^i,\psi;\mu_i)
        +
        \langle F(U_k^i;\mu_i),\psi\rangle_H
        &=
        \langle f(t_k),\psi\rangle_H,
        \qquad \forall \psi\in V^\ell,
        \\
        \langle U_0^i,\psi\rangle_H
        &=
        \langle \phi,\psi\rangle_H,
        \qquad \forall \psi\in V^\ell,
    \end{align}
\end{subequations}
where $\op U_k^i:={(U_k^i-U_{k-1}^i)}/{\Delta t}$.

Since \(\mathcal P\) is compact and \(\kappa\) and \(L_F\) are
continuous, we define
\begin{equation}
\label{kappauniform}
    \underline\kappa_{\mathcal P}
    :=
    \inf_{\mu\in\mathcal P}\kappa(\mu)>0,
    \qquad
    \overline L_{F,\mathcal P}
    :=
    \sup_{\mu\in\mathcal P}L_F(\mu)<\infty.    
\end{equation}

We also define the combined parameter modulus
$    \rho_{\mathcal P}(s)
    :=
    \omega_a^{\mathcal P}(s)
    +
    \omega_F^{\mathcal P}(s)$
    for any $s\ge0$.
Throughout this section, we assume that (A1)--(A3), (F1)--(F2), and the
regularity assumptions stated in Section \ref{subsec:problem_formulation} hold. We also assume that time step 
\(\Delta t\) is
sufficiently small so that
\begin{equation}
\label{stability_cond}
        1-\frac{\overline L_{F,\mathcal P}^{\,2}}
         {\underline\kappa_{\mathcal P}}\Delta t =  1-\frac{\overline L_{F,\mathcal P}^{\,2}}
         {\underline\kappa_{\mathcal P}}\frac{T}{m} \ge \frac12, \quad     \, \,
    1-(1
    +
    L_F(\mu_0)
    +
    \frac{L_F(\mu_0)^2}{2\kappa(\mu_0)})\Delta t\ge \frac12 .
\end{equation}

We first prove a sensitivity estimate for the exact solutions. The estimate
shows that the difference between \(u_0\) and \(u_1\) is controlled by the
parameter modulus \(\rho_{\mathcal P}(\delta)\).

\begin{lemma}\label{lem:uk-u1k}
Let \(\mu_0,\mu_1\in\mathcal P\). Then, for every
\(k\in\{0,1,\ldots,m\}\),
\begin{equation}\label{inequality:uk-u1k}
    \norm{u_0(t_k)-u_1(t_k)}_H^2
    \le
    \Lambda_{\mathcal P}\rho_{\mathcal P}(\delta)^2
    e^{E_{\mathcal P}T}
    \int_0^T
        e^{-E_{\mathcal P}s}
        \left(1+\norm{u_0(s)}_V^2\right)
    \diff s,
\end{equation}
where
$
    E_{\mathcal P}
    =
    \frac{\overline L_{F,\mathcal P}^{\,2}}
         {\underline\kappa_{\mathcal P}},
    \Lambda_{\mathcal P}
    =
    \frac{2+4\alpha}
         {\underline\kappa_{\mathcal P}}.
$

\end{lemma}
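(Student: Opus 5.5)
The plan is a standard energy estimate for the difference $e(t):=u_0(t)-u_1(t)$, closed with Gronwall's inequality. Since $e(0)=\phi-\phi=0$, the estimate starts from zero initial error.

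First, subtract the weak formulations \eqref{PDE:nonlinear} at $\mu_0$ and $\mu_1$ and test with $\psi=e(t)$. The regularity assumption lets us read the duality pairing as an $H$ inner product, and the energy identity gives $\langle \partial_t e,e\rangle = \tfrac12\frac{d}{dt}\norm{e}_H^2$. Split the bilinear part as
\[
a(u_0,e;\mu_0)-a(u_1,e;\mu_1)=a(e,e;\mu_1)+\big(a(u_0,e;\mu_0)-a(u_0,e;\mu_1)\big).
\]
The first term is bounded below by $\underline\kappa_{\mathcal P}\norm{e}_V^2$ via (A2). The second is bounded by $\omega_a^{\mathcal P}(\delta)\norm{u_0}_V\norm{e}_V$ via (A3). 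For the nonlinear part, apply \eqref{lipschitz combined} with $u=u_0$, $v=u_1$, $\nu=\mu_1$ and Cauchy--Schwarz in $H$:
\[
\abs{\langle F(u_0;\mu_0)-F(u_1;\mu_1),e\rangle_H}\le \omega_F^{\mathcal P}(\delta)(\norm{u_0}_V+1)\norm{e}_H+\overline L_{F,\mathcal P}\norm{e}_V\norm{e}_H .
\]
The two forcing terms cancel.

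Second, absorb every $V$-norm of $e$ into the coercive term with Young's inequality. Three terms need treatment.
\begin{itemize}
\item The Lipschitz term gives $\overline L_{F,\mathcal P}\norm{e}_V\norm{e}_H\le \frac{\underline\kappa_{\mathcal P}}{4}\norm{e}_V^2+\frac{\overline L_{F,\mathcal P}^2}{\underline\kappa_{\mathcal P}}\norm{e}_H^2$.
\item The $a$-modulus term gives $\omega_a^{\mathcal P}\norm{u_0}_V\norm{e}_V\le \frac{\underline\kappa_{\mathcal P}}{4}\norm{e}_V^2+\frac{1}{\underline\kappa_{\mathcal P}}(\omega_a^{\mathcal P})^2\norm{u_0}_V^2$.
\item For the $F$-modulus term, use $\norm{e}_H\le\sqrt{\alpha}\norm{e}_V$ and then Young with weight $\underline\kappa_{\mathcal P}/4$. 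This produces the contribution $\frac{\alpha}{\underline\kappa_{\mathcal P}}(\omega_F^{\mathcal P})^2(\norm{u_0}_V+1)^2\le \frac{2\alpha}{\underline\kappa_{\mathcal P}}(\omega_F^{\mathcal P})^2(1+\norm{u_0}_V^2)$.
\end{itemize}
Bounding $(\omega_a^{\mathcal P})^2,(\omega_F^{\mathcal P})^2\le\rho_{\mathcal P}(\delta)^2$ and discarding the leftover nonnegative $\frac{\underline\kappa_{\mathcal P}}{4}\norm{e}_V^2$, then multiplying by $2$, we aim for
\[
\frac{d}{dt}\norm{e}_H^2\le 2E_{\mathcal P}\norm{e}_H^2+\Lambda_{\mathcal P}\rho_{\mathcal P}(\delta)^2(1+\norm{u_0}_V^2).
\]
The constant $\Lambda_{\mathcal P}=(2+4\alpha)/\underline\kappa_{\mathcal P}$ should come out once the weights are chosen as above.

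The main obstacle is bookkeeping: the rate in the statement is $E_{\mathcal P}$, not $2E_{\mathcal P}$, so the Young weights must be arranged accordingly. Concretely, I would give the Lipschitz term the weight $\underline\kappa_{\mathcal P}/2$, so its $H$-coefficient becomes $\overline L_{F,\mathcal P}^2/(2\underline\kappa_{\mathcal P})$, which doubles to exactly $E_{\mathcal P}$. The remaining $\underline\kappa_{\mathcal P}/2$ is then split between the two modulus terms, and I would verify that the resulting constants are still dominated by $(2+4\alpha)/\underline\kappa_{\mathcal P}$, enlarging generously where needed. Finally, multiply by the integrating factor $e^{-E_{\mathcal P}t}$ and integrate from $0$ to $t_k$ using $e(0)=0$. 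Since the integrand is nonnegative, extend the integral to $[0,T]$ and bound $e^{E_{\mathcal P}t_k}\le e^{E_{\mathcal P}T}$, which gives \eqref{inequality:uk-u1k}.
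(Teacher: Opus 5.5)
Your proposal is correct and is essentially the paper's proof. It uses the same energy estimate for $e=u_0-u_1$ with the splitting $a(e,e;\mu_1)+\bigl(a(u_0,e;\mu_0)-a(u_0,e;\mu_1)\bigr)$, the combined bound \eqref{lipschitz combined} with $L_F(\mu_1)\le\overline L_{F,\mathcal P}$, your corrected Young weights ($\underline\kappa_{\mathcal P}/2$ on the Lipschitz term and $\underline\kappa_{\mathcal P}/4$ on each modulus term, exactly as in the paper), and Gronwall from $e(0)=0$ followed by $t_k\le T$. With these weights the absorption is exact and, after $(\norm{u_0}_V+1)^2\le 2(1+\norm{u_0}_V^2)$, the constants come out precisely as $E_{\mathcal P}$ and $\Lambda_{\mathcal P}=(2+4\alpha)/\underline\kappa_{\mathcal P}$, so no further enlargement is needed.
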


\begin{proof}
Let $e(t):=u_0(t)-u_1(t).$ Subtracting the weak formulations of \eqref{PDE:nonlinear} corresponding to
\(\mu_0\) and \(\mu_1\), we obtain, for every \(\psi\in V\) and for a.e.
\(t\in(0,T)\),
\begin{equation}\label{eqn:exact_sensitivity_difference}
\begin{aligned}
    \langle \partial_t e(t),\psi\rangle_H
    +
    a(u_0(t),\psi;\mu_0)
    -
    a(u_1(t),\psi;\mu_1)
    +
    \left\langle
        F(u_0(t);\mu_0)-F(u_1(t);\mu_1),
        \psi
    \right\rangle_H
    = 0.
\end{aligned}
\end{equation}

Taking \(\psi=e(t)\), using the energy identity, and applying the coercivity condition \eqref{kappauniform} uniformly over \(\mathcal P\), we get
\begin{equation}\label{eqn:exact_sensitivity_energy}
\begin{aligned}
    \frac12\frac{\diff}{\diff t}\norm{e(t)}_H^2
    +
    \underline\kappa_{\mathcal P}\norm{e(t)}_V^2
    \le
    &
    \abs{
        a(u_0(t),e(t);\mu_0)
        -
        a(u_0(t),e(t);\mu_1)
    }
    \\
    &
    +
    \norm{
        F(u_0(t);\mu_0)-F(u_1(t);\mu_1)
    }_H
    \norm{e(t)}_H .
\end{aligned}
\end{equation}

By the parameter-continuity assumption (A3), we have
\begin{equation*}
    \abs{
        a(u_0(t),e(t);\mu_0)
        -
        a(u_0(t),e(t);\mu_1)
    }
    \le
    \omega_a^{\mathcal P}(\delta)
    \norm{u_0(t)}_V
    \norm{e(t)}_V .
\end{equation*}
Moreover, the combined estimate \eqref{lipschitz combined} gives
\begin{equation*}
\begin{aligned}
    \norm{
        F(u_0(t);\mu_0)-F(u_1(t);\mu_1)
    }_H
    &\le
    \omega_F^{\mathcal P}(\delta)
    \left(\norm{u_0(t)}_V+1\right)
    +
    L_F(\mu_1)\norm{e(t)}_V
    \\
    &\le
    \omega_F^{\mathcal P}(\delta)
    \left(\norm{u_0(t)}_V+1\right)
    +
    \overline L_{F,\mathcal P}\norm{e(t)}_V .
\end{aligned}
\end{equation*}
Substituting these two estimates into
\eqref{eqn:exact_sensitivity_energy} yields
\begin{equation}\label{eqn:exact_sensitivity_before_young}
\begin{aligned}
    \frac12\frac{\diff}{\diff t}\norm{e(t)}_H^2
    +
    \underline\kappa_{\mathcal P}\norm{e(t)}_V^2
    \le
    &
    \omega_a^{\mathcal P}(\delta)
    \norm{u_0(t)}_V
    \norm{e(t)}_V
    \\
    &
    +
    \omega_F^{\mathcal P}(\delta)
    \left(\norm{u_0(t)}_V+1\right)
    \norm{e(t)}_H
    +
    \overline L_{F,\mathcal P}
    \norm{e(t)}_V
    \norm{e(t)}_H .
\end{aligned}
\end{equation}

We now estimate the three terms on the right-hand side of
\eqref{eqn:exact_sensitivity_before_young}. First, Young's inequality gives
\begin{equation*}
    \omega_a^{\mathcal P}(\delta)
    \norm{u_0(t)}_V
    \norm{e(t)}_V
    \le
    \frac{\underline\kappa_{\mathcal P}}{4}
    \norm{e(t)}_V^2
    +
    \frac{\omega_a^{\mathcal P}(\delta)^2}
         {\underline\kappa_{\mathcal P}}
    \norm{u_0(t)}_V^2 .
\end{equation*}
Second, using the embedding
\(\norm{e(t)}_H\le \sqrt{\alpha}\norm{e(t)}_V\), we obtain
\begin{equation*}
\begin{aligned}
    &
    \omega_F^{\mathcal P}(\delta)
    \left(\norm{u_0(t)}_V+1\right)
    \norm{e(t)}_H
    \le
    \frac{\underline\kappa_{\mathcal P}}{4}
    \norm{e(t)}_V^2
    +
    \frac{\alpha\,\omega_F^{\mathcal P}(\delta)^2}
         {\underline\kappa_{\mathcal P}}
    \left(\norm{u_0(t)}_V+1\right)^2 .
\end{aligned}
\end{equation*}
Finally,
\begin{equation*}
    \overline L_{F,\mathcal P}
    \norm{e(t)}_V
    \norm{e(t)}_H
    \le
    \frac{\underline\kappa_{\mathcal P}}{2}
    \norm{e(t)}_V^2
    +
    \frac{\overline L_{F,\mathcal P}^{\,2}}
         {2\underline\kappa_{\mathcal P}}
    \norm{e(t)}_H^2 .
\end{equation*}

Substituting these bounds into
\eqref{eqn:exact_sensitivity_before_young}, the
\(\norm{e(t)}_V^2\) terms are absorbed by the coercivity term on the left.
We obtain
\begin{equation*}
\begin{aligned}
    \frac{\diff}{\diff t}\norm{e(t)}_H^2
    \le
    &
    \frac{\overline L_{F,\mathcal P}^{\,2}}
         {\underline\kappa_{\mathcal P}}
    \norm{e(t)}_H^2
    +
    \frac{2\,\omega_a^{\mathcal P}(\delta)^2}
         {\underline\kappa_{\mathcal P}}
    \norm{u_0(t)}_V^2
    +
    \frac{2\alpha\,\omega_F^{\mathcal P}(\delta)^2}
         {\underline\kappa_{\mathcal P}}
    \left(\norm{u_0(t)}_V+1\right)^2 .
\end{aligned}
\end{equation*}
Since
   $ \left(\norm{u_0(t)}_V+1\right)^2
    \le
    2\left(1+\norm{u_0(t)}_V^2\right), \
    \omega_a^{\mathcal P}(\delta)^2
    +
    \omega_F^{\mathcal P}(\delta)^2
    \le
    \rho_{\mathcal P}(\delta)^2$,
we have
\begin{equation}\label{eqn:exact_sensitivity_differential}
    \frac{\diff}{\diff t}\norm{e(t)}_H^2
    \le
    E_{\mathcal P}\norm{e(t)}_H^2
    +
    \Lambda_{\mathcal P}\rho_{\mathcal P}(\delta)^2
    \left(1+\norm{u_0(t)}_V^2\right),
\end{equation}
where
$
    E_{\mathcal P}
    =
    \frac{\overline L_{F,\mathcal P}^{\,2}}
         {\underline\kappa_{\mathcal P}},
    \Lambda_{\mathcal P}
    =
    \frac{2+4\alpha}
         {\underline\kappa_{\mathcal P}}.
$

Because \(u_0(0)=u_1(0)=\phi\), we have \(e(0)=0\). Applying Gronwall's
inequality to \eqref{eqn:exact_sensitivity_differential} gives, for every
\(t\in[0,T]\),
\begin{equation*}
    \norm{e(t)}_H^2
    \le
    \Lambda_{\mathcal P}\rho_{\mathcal P}(\delta)^2
    e^{E_{\mathcal P}t}
    \int_0^t
        e^{-E_{\mathcal P}s}
        \left(1+\norm{u_0(s)}_V^2\right)
    \diff s.
\end{equation*}
Taking \(t=t_k\) and using \(t_k\le T\), we conclude \eqref{inequality:uk-u1k}.
\end{proof}

After establishing the sensitivity bound for the exact solution, we now analyze how perturbations in the parameter affect the numerical solutions. Specifically, we consider the sensitivity of the low-rank approximations obtained using the Backward Euler-Galerkin scheme with the same POD basis constructed from parameter $\mu_0$, but applied to both parameters $\mu_0$ and $\mu_1$.

\begin{lemma}\label{lem:Uk-U1k}
For any discrete time index \(k\in\{1,2,\ldots,m\}\), the following stability estimate holds for the difference between the POD approximations corresponding to parameters $\mu_0$ and $\mu_1$:
\begin{equation}\label{inequality:U_k-U^1_k}
    \norm{U_k^0-U_k^1}_H^2
    \le
    \Lambda_{\mathcal P}\rho_{\mathcal P}(\delta)^2
    e^{2E_{\mathcal P}T}
    \Delta t
    \sum_{q=1}^m
    e^{2E_{\mathcal P}(1-q)\Delta t}
    \left(\norm{U_q^0}_V^2+1\right),
\end{equation}
where
$
    E_{\mathcal P}
    =
    \frac{\overline L_{F,\mathcal P}^{\,2}}
         {\underline\kappa_{\mathcal P}},
    \Lambda_{\mathcal P}
    =
    \frac{2+4\alpha}
         {\underline\kappa_{\mathcal P}}.
$
\end{lemma}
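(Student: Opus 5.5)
We mimic the proof of Lemma~\ref{lem:uk-u1k} at the discrete level. Set $e_k:=U_k^0-U_k^1\in V^\ell$. Since $U_0^0$ and $U_0^1$ are both the $H$-orthogonal projection of $\phi$ onto $V^\ell$, we have $e_0=0$. Subtracting the two schemes \eqref{eqn:POD_scheme_mu_i} gives, for all $\psi\in V^\ell$,
\[
\langle \op e_k,\psi\rangle_H + a(U_k^0,\psi;\mu_0)-a(U_k^1,\psi;\mu_1) + \langle F(U_k^0;\mu_0)-F(U_k^1;\mu_1),\psi\rangle_H = 0 .
\]
Because $e_k\in V^\ell$, we may test with $\psi=e_k$. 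We split
\[
a(U_k^0,e_k;\mu_0)-a(U_k^1,e_k;\mu_1)=a(e_k,e_k;\mu_1)+\big[a(U_k^0,e_k;\mu_0)-a(U_k^0,e_k;\mu_1)\big].
\]
The first piece is bounded below by $\underline\kappa_{\mathcal P}\norm{e_k}_V^2$ using (A2). The second is controlled by (A3) as $\omega_a^{\mathcal P}(\delta)\norm{U_k^0}_V\norm{e_k}_V$. For the nonlinear term we use \eqref{lipschitz combined} with $u=U_k^0$ and $v=U_k^1$, which gives $\omega_F^{\mathcal P}(\delta)(\norm{U_k^0}_V+1)+\overline L_{F,\mathcal P}\norm{e_k}_V$. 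The time term is handled by the identity
\[
\langle e_k-e_{k-1},e_k\rangle_H=\tfrac12\big(\norm{e_k}_H^2-\norm{e_{k-1}}_H^2+\norm{e_k-e_{k-1}}_H^2\big)\ge \tfrac12\big(\norm{e_k}_H^2-\norm{e_{k-1}}_H^2\big).
\]

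Next we apply exactly the same three Young splittings as in Lemma~\ref{lem:uk-u1k}, with weights $\underline\kappa_{\mathcal P}/4$, $\underline\kappa_{\mathcal P}/4$ and $\underline\kappa_{\mathcal P}/2$. These absorb all $\norm{e_k}_V^2$ terms, and we use $\norm{e_k}_H\le\sqrt\alpha\norm{e_k}_V$ on the $\omega_F$ term. The result is the discrete differential inequality
\[
\norm{e_k}_H^2-\norm{e_{k-1}}_H^2 \le \Delta t\,E_{\mathcal P}\norm{e_k}_H^2+\Delta t\,\Lambda_{\mathcal P}\rho_{\mathcal P}(\delta)^2\big(1+\norm{U_k^0}_V^2\big),
\]
which has the same constants $E_{\mathcal P}$ and $\Lambda_{\mathcal P}$ as before.

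The main step, and the only place needing care, is the implicit discrete Gronwall argument. Rearranging gives
\[
(1-E_{\mathcal P}\Delta t)\norm{e_k}_H^2\le \norm{e_{k-1}}_H^2+\Delta t\,\Lambda_{\mathcal P}\rho_{\mathcal P}(\delta)^2(1+\norm{U_k^0}_V^2).
\]
The first condition in \eqref{stability_cond} gives $E_{\mathcal P}\Delta t\le 1/2$. From the elementary bound $(1-x)^{-1}\le e^{2x}$ for $x\in[0,1/2]$ we obtain $(1-E_{\mathcal P}\Delta t)^{-1}\le e^{2E_{\mathcal P}\Delta t}$. Iterating from $e_0=0$ then yields
\[
\norm{e_k}_H^2\le \Lambda_{\mathcal P}\rho_{\mathcal P}(\delta)^2\Delta t\sum_{q=1}^k e^{2E_{\mathcal P}(k-q+1)\Delta t}\big(1+\norm{U_q^0}_V^2\big).
\]
Finally, we write $e^{2E_{\mathcal P}(k-q+1)\Delta t}=e^{2E_{\mathcal P}k\Delta t}e^{2E_{\mathcal P}(1-q)\Delta t}$, bound $k\Delta t\le T$, and extend the sum from $k$ to $m$ since all terms are nonnegative. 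This gives \eqref{inequality:U_k-U^1_k}.
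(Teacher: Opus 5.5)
Your proposal is correct and follows essentially the same argument as the paper: test the difference of the two reduced schemes with $e_k$, bound $a(e_k,e_k;\mu_1)$ below by $\underline\kappa_{\mathcal P}\norm{e_k}_V^2$, use (A3) and \eqref{lipschitz combined}, apply the same three Young splittings, and close with an implicit discrete Gronwall iteration from $e_0=0$. The only differences are cosmetic: you treat the time term with the polarization identity where the paper uses Cauchy--Schwarz plus Young (both give the same inequality), and you bound $(1-E_{\mathcal P}\Delta t)^{-1}\le e^{2E_{\mathcal P}\Delta t}$ directly instead of passing through $1+2E_{\mathcal P}\Delta t$.
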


\begin{proof}
Let \(\xi_k:=U_k^0-U_k^1\) for \(k=0,1,\ldots,m\). Then
\(\op \xi_k=\op U_k^0-\op U_k^1\). We begin by subtracting the discrete weak formulations \eqref{eqn:POD_scheme_mu_i} corresponding to \(i=0\) and \(i=1\), which yields for any test function \(\psi\in V^\ell\):
\begin{equation}\label{eqn:discrete_difference}
    \langle \op \xi_k,\psi\rangle_H
    +
    a(U_k^0,\psi;\mu_0)
    -
    a(U_k^1,\psi;\mu_1)
    +
    \langle F(U_k^0;\mu_0)-F(U_k^1;\mu_1),\psi\rangle_H
    =
    0.
\end{equation}
Rewriting this identity by isolating the perturbed bilinear form:
\begin{equation*}
    \langle \op \xi_k,\psi\rangle_H
    +
    a(\xi_k,\psi;\mu_1)
    =
    -
    \left(
        a(U_k^0,\psi;\mu_0)
        -
        a(U_k^0,\psi;\mu_1)
        +
        \langle F(U_k^0;\mu_0)-F(U_k^1;\mu_1),\psi\rangle_H
    \right).
\end{equation*}
Selecting the test function \(\psi=\xi_k\in V^\ell\) and applying the Cauchy--Schwarz inequality along with the parameter-continuity assumption (A3), the combined estimate \eqref{lipschitz combined}, and the uniform coercivity bound \eqref{kappauniform}, we obtain:
\begin{align}\label{eqn:3.12}
\begin{split}
    \langle \op \xi_k,\xi_k\rangle_H
    +
    \underline\kappa_{\mathcal P}\norm{\xi_k}_V^2
    \leq\;&
    \langle \op \xi_k,\xi_k\rangle_H
    +
    a(\xi_k,\xi_k;\mu_1)
    \\ =\;&
    -
    \left(
        a(U_k^0,\xi_k;\mu_0)
        -
        a(U_k^0,\xi_k;\mu_1)
        +
        \langle F(U_k^0;\mu_0)-F(U_k^1;\mu_1),\xi_k\rangle_H
    \right)
    \\
    \leq\;&
    \abs{
        a(U_k^0,\xi_k;\mu_0)
        -
        a(U_k^0,\xi_k;\mu_1)
    }
    +
    \norm{
        F(U_k^0;\mu_0)-F(U_k^1;\mu_1)
    }_H
    \norm{\xi_k}_H
    \\
    \leq\;&
    \omega_a^{\mathcal P}(\delta)
    \norm{U_k^0}_V
    \norm{\xi_k}_V
    +
    \left(
        \omega_F^{\mathcal P}(\delta)(\norm{U_k^0}_V+1)
        +
        \overline L_{F,\mathcal P}\norm{\xi_k}_V
    \right)
    \norm{\xi_k}_H .
\end{split}
\end{align}

Employing the embedding \(\norm{\xi_k}_H^2\le \alpha\norm{\xi_k}_V^2\) and combining with inequality \eqref{eqn:3.12}, we proceed to apply Young's inequality:
\begin{align}\label{eqn:3.15}
\begin{split}
    \langle \op \xi_k,\xi_k\rangle_H
    +
    \underline\kappa_{\mathcal P}\norm{\xi_k}_V^2
    \leq\;&
    \omega_a^{\mathcal P}(\delta)
    \norm{U_k^0}_V
    \norm{\xi_k}_V
    +
    \left(
        \omega_F^{\mathcal P}(\delta)(\norm{U_k^0}_V+1)
        +
        \overline L_{F,\mathcal P}\norm{\xi_k}_V
    \right)
    \norm{\xi_k}_H
    \\
    \leq\;&
    \frac{\omega_a^{\mathcal P}(\delta)^2}
         {\underline\kappa_{\mathcal P}}
    \norm{U_k^0}_V^2
    +
    \frac{\underline\kappa_{\mathcal P}}{4}
    \norm{\xi_k}_V^2
    +
    \frac{\alpha\,\omega_F^{\mathcal P}(\delta)^2}
         {\underline\kappa_{\mathcal P}}
    \left(\norm{U_k^0}_V+1\right)^2
    +
    \frac{\underline\kappa_{\mathcal P}}{4}
    \norm{\xi_k}_V^2
    \\
    &\quad
    +
    \frac{\underline\kappa_{\mathcal P}}{2}
    \norm{\xi_k}_V^2
    +
    \frac{\overline L_{F,\mathcal P}^{\,2}}
         {2\underline\kappa_{\mathcal P}}
    \norm{\xi_k}_H^2
    \\
    \leq\;&
    \frac{\Lambda_{\mathcal P}}{2}
    \rho_{\mathcal P}(\delta)^2
    \left(\norm{U_k^0}_V^2+1\right)
    +
    \underline\kappa_{\mathcal P}\norm{\xi_k}_V^2
    +
    \frac{E_{\mathcal P}}{2}
    \norm{\xi_k}_H^2,
\end{split}
\end{align}
where
$    E_{\mathcal P}
    =
    \frac{\overline L_{F,\mathcal P}^{\,2}}
         {\underline\kappa_{\mathcal P}},
    \Lambda_{\mathcal P}
    =
    \frac{2+4\alpha}{\underline\kappa_{\mathcal P}}.$

Canceling the coercive term on both sides of \eqref{eqn:3.15}, and applying the Cauchy--Schwarz inequality to the discrete time derivative term, we derive:
\begin{equation}
    \label{eqn:3.16}
    \frac{1}{\Delta t}
    \left(
        \norm{\xi_k}_H^2
        -
        \norm{\xi_k}_H\norm{\xi_{k-1}}_H
    \right)
    \leq
    \langle \op \xi_k,\xi_k\rangle_H
    \leq
    \frac{\Lambda_{\mathcal P}}{2}
    \rho_{\mathcal P}(\delta)^2
    \left(\norm{U_k^0}_V^2+1\right)
    +
    \frac{E_{\mathcal P}}{2}
    \norm{\xi_k}_H^2 .
\end{equation}
Applying Young's inequality to the cross-term
    $\norm{\xi_k}_H\norm{\xi_{k-1}}_H
    \leq
    \frac12\norm{\xi_k}_H^2
    +
    \frac12\norm{\xi_{k-1}}_H^2$, we obtain the recursive estimate:
\begin{equation*}
    \norm{\xi_k}_H^2
    \leq
    \Lambda_{\mathcal P}\rho_{\mathcal P}(\delta)^2
    \Delta t
    \left(\norm{U_k^0}_V^2+1\right)
    +
    E_{\mathcal P}\Delta t\norm{\xi_k}_H^2
    +
    \norm{\xi_{k-1}}_H^2 .
\end{equation*}

Recalling the numerical stability condition \eqref{stability_cond} \(1-E_{\mathcal P}\Delta t\ge \frac12\), we observe that:
\begin{equation*}
    \frac{1}{1-E_{\mathcal P}\Delta t}
    =
    1+\frac{E_{\mathcal P}\Delta t}{1-E_{\mathcal P}\Delta t}
    \leq
    1+2E_{\mathcal P}\Delta t.
\end{equation*}
Then we rearrange the recursive inequality to obtain:
\begin{equation*}
    \norm{\xi_k}_H^2
    \leq
    (1+2E_{\mathcal P}\Delta t)
    \left(
        \Lambda_{\mathcal P}\rho_{\mathcal P}(\delta)^2
        \Delta t
        \left(\norm{U_k^0}_V^2+1\right)
        +
        \norm{\xi_{k-1}}_H^2
    \right).
\end{equation*}
Unfolding this recursion yields the discrete accumulation:
\begin{equation}\label{eqn:3.24}
    \norm{\xi_k}_H^2
    \leq
    \Lambda_{\mathcal P}\rho_{\mathcal P}(\delta)^2
    \Delta t
    \sum_{q=1}^k
    (1+2E_{\mathcal P}\Delta t)^{k+1-q}
    \left(\norm{U_q^0}_V^2+1\right)
    +
    (1+2E_{\mathcal P}\Delta t)^k
    \norm{\xi_0}_H^2 .
\end{equation}
Utilizing the elementary inequality \(1+x\le e^x\) for all \(x\ge0\), we bound the growth factors
    $(1+2E_{\mathcal P}\Delta t)^s
    \leq
    e^{2E_{\mathcal P}s\Delta t}
    \text{for any }s>0$. This allows us to estimate the summation term:
\begin{align*}
    \Lambda_{\mathcal P}\rho_{\mathcal P}(\delta)^2
    \Delta t
    \sum_{q=1}^k
    (1+2E_{\mathcal P}\Delta t)^{k+1-q}
    \left(\norm{U_q^0}_V^2+1\right)
    &
    \leq
    \Lambda_{\mathcal P}\rho_{\mathcal P}(\delta)^2
    \Delta t
    \sum_{q=1}^k
    e^{2E_{\mathcal P}(k+1-q)\Delta t}
    \left(\norm{U_q^0}_V^2+1\right)
    \\
    &
    =
    \Lambda_{\mathcal P}\rho_{\mathcal P}(\delta)^2
    e^{2E_{\mathcal P}\Delta t}
    \Delta t
    \sum_{q=1}^k
    e^{2E_{\mathcal P}(k-q)\Delta t}
    \left(\norm{U_q^0}_V^2+1\right)
    \\
    &
    \leq
    \Lambda_{\mathcal P}\rho_{\mathcal P}(\delta)^2
    e^{2E_{\mathcal P}T}
    \Delta t
    \sum_{q=1}^m
    e^{2E_{\mathcal P}(1-q)\Delta t}
    \left(\norm{U_q^0}_V^2+1\right),
\end{align*}
where we have used the bounds \(k\Delta t\le T\).

Finally, observing that the initial conditions satisfy
$    \langle U_0^0-U_0^1,\psi\rangle_H=0
    \quad
    \forall \psi\in V^\ell$,
we conclude \(\norm{\xi_0}_H^2=0\), which simplifies \eqref{eqn:3.24} to the desired estimate:
\begin{equation*}
    \norm{U_k^0-U_k^1}_H^2
    \leq
    \Lambda_{\mathcal P}\rho_{\mathcal P}(\delta)^2
    e^{2E_{\mathcal P}T}
    \Delta t
    \sum_{q=1}^m
    e^{2E_{\mathcal P}(1-q)\Delta t}
    \left(\norm{U_q^0}_V^2+1\right).
\end{equation*}
This completes the proof of the stability estimate for the discrete POD approximations.
\end{proof}

Combining the results of Lemma \ref{lem:uk-u1k} and Lemma \ref{lem:Uk-U1k}, we establish the following sensitivity analysis for the POD approximation error with respect to parameter perturbations.

\begin{theorem}\label{thm:sensitive analysis}
There exists a positive constant \(\widehat C>0\), such that the following sensitivity estimate holds:
\begin{equation}\label{eqn: sensitivity d1<d+cr}
    \left(
        \frac{1}{m}\sum_{k=1}^m
        \norm{u_1(t_k)-U_k^1}_H^2
    \right)^{\frac12}
    \leq
    \left(
        \frac{1}{m}\sum_{k=1}^m
        \norm{u_0(t_k)-U_k^0}_H^2
    \right)^{\frac12}
    +
    \widehat C \rho_{\mathcal P}(\delta),
\end{equation}
where  $\widehat C$ is independent of $\ell$, $m$, $\delta$, and of
the parameter pair $\mu_0,\mu_1\in\mathcal P$. 
\end{theorem}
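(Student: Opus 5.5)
The plan is to split the query error by the triangle inequality in the discrete $\ell^2$-in-time norm $\|(v_k)\|_m:=(\frac1m\sum_{k=1}^m\|v_k\|_H^2)^{1/2}$. Writing $u_1(t_k)-U_k^1=(u_1(t_k)-u_0(t_k))+(u_0(t_k)-U_k^0)+(U_k^0-U_k^1)$ and using Minkowski's inequality gives
\[
\|(u_1(t_k)-U_k^1)\|_m\le \|(u_0(t_k)-U_k^0)\|_m+\|(u_0(t_k)-u_1(t_k))\|_m+\|(U_k^0-U_k^1)\|_m .
\]
The first term is exactly the reference error in \eqref{eqn: sensitivity d1<d+cr}. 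It therefore suffices to bound each of the two remaining terms by a constant times $\rho_{\mathcal P}(\delta)$.

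For the exact-solution term I will apply Lemma~\ref{lem:uk-u1k}. Its right-hand side does not depend on $k$, so the average over $k$ is bounded by the same quantity. This gives the constant $C_1^2=\Lambda_{\mathcal P}e^{E_{\mathcal P}T}\int_0^T(1+\|u_0(s)\|_V^2)\,ds$. It is finite because $u_0\in C([0,T];V)$, and it depends only on $\mathcal P$, $\alpha$, $T$ and $u_0$, not on $\ell$, $m$ or $\delta$. To make it uniform over the pair $(\mu_0,\mu_1)$ as well, I will assume (or argue from the continuity of the solution map in $\mu$ and compactness of $\mathcal P$) that $\sup_{\mu\in\mathcal P}\|u_\mu\|_{C([0,T];V)}<\infty$.

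For the discrete term I will apply Lemma~\ref{lem:Uk-U1k}. Bounding $e^{2E_{\mathcal P}(1-q)\Delta t}\le e^{2E_{\mathcal P}\Delta t}\le e^{2E_{\mathcal P}T}$, again uniformly in $k$, reduces the task to an $m$- and $\ell$-independent bound on $\Delta t\sum_{q=1}^m(\|U_q^0\|_V^2+1)$. This is a discrete $L^2(0,T;V)$ stability estimate for the reference ROM solution, and I will prove it directly.

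\textbf{Step (main obstacle): stability of $U_q^0$.} I will test \eqref{eqn:POD_scheme_mu_i} (with $i=0$) against $\psi=U_k^0$. I will use
\[
\langle\op U_k^0,U_k^0\rangle_H\ge\frac{1}{2\Delta t}\left(\|U_k^0\|_H^2-\|U_{k-1}^0\|_H^2\right)
\]
and coercivity (A2). For the nonlinearity, (F1) gives $\|F(U_k^0;\mu_0)\|_H\le L_F(\mu_0)\|U_k^0\|_V+\|F(0;\mu_0)\|_H$, where $\|F(0;\mu_0)\|_H$ is bounded uniformly on $\mathcal P$ by (F2) and continuity. The forcing satisfies $\|f(t_k)\|_H\le\|f\|_{C([0,T];H)}$. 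With Young's inequality I will absorb half of $\kappa(\mu_0)\|U_k^0\|_V^2$ and leave a term of the form $(1+L_F(\mu_0)+L_F(\mu_0)^2/(2\kappa))\|U_k^0\|_H^2$ on the right. The second condition in \eqref{stability_cond} is presumably tailored exactly so that this term can be moved to the left with a factor at least $1/2$. A discrete Gronwall argument then gives $\max_k\|U_k^0\|_H^2+\kappa\,\Delta t\sum_k\|U_k^0\|_V^2\le C(\|U_0^0\|_H^2+T(\|f\|^2+1))$. Finally $\|U_0^0\|_H\le\|\phi\|_H$, since $U_0^0$ is an $H$-orthogonal projection, and so the bound is independent of $\ell$ and $m$.

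The delicate points are bookkeeping: the Young constants must be made to match \eqref{stability_cond}, and none of them may depend on the POD basis. In particular, no inverse inequality on $V^\ell$ may be used. Combining the three bounds gives $\widehat C=C_1+C_2$.
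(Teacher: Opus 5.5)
Your proof follows the paper's structure: Minkowski in the discrete norm, Lemma~\ref{lem:uk-u1k} for the exact solutions, Lemma~\ref{lem:Uk-U1k} for the reduced solutions, and an $\ell$-, $m$-independent discrete energy bound for $U_k^0$ using $\norm{U_0^0}_H\le\norm{\phi}_H$. The gap is the uniformity over the parameter pair in the exact-solution term. The constant from Lemma~\ref{lem:uk-u1k} contains $\int_0^T\norm{u_{\mu_0}(s)}_V^2\,\diff s$, and you make it independent of $\mu_0$ by assuming $\sup_{\mu\in\mathcal P}\norm{u_\mu}_{C([0,T];V)}<\infty$. That is not a hypothesis: the regularity $u_\mu\in C([0,T];V)$ is assumed for each $\mu$ separately. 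Your fallback does not work as stated either. Nothing available gives continuity of $\mu\mapsto u_\mu$ into $C([0,T];V)$, since Lemma~\ref{lem:uk-u1k} controls only $H$-norms pointwise in time. As written, your $\widehat C$ depends on $\mu_0$, whereas the statement claims independence of $\mu_0,\mu_1$.

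The missing step is the continuous analogue of the estimate you already plan for the ROM, and only an $L^2(0,T;V)$ bound is needed. Set $M_{\mathcal P}:=\norm{f}_{L^\infty(0,T;H)}+\sup_{\nu\in\mathcal P}\norm{F(0;\nu)}_H$, which is finite by (F2). Test \eqref{PDE:nonlinear} with $u_\mu(t)$, use the energy identity and $\underline\kappa_{\mathcal P}$, and write $\norm{F(u_\mu;\mu)}_H\le\overline L_{F,\mathcal P}\norm{u_\mu}_V+\norm{F(0;\mu)}_H$. Bound the data term by $M_{\mathcal P}\norm{u_\mu}_H\le\sqrt\alpha M_{\mathcal P}\norm{u_\mu}_V$ and absorb it into the coercive term. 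Gronwall followed by integration in time then gives $\int_0^T\norm{u_\mu}_V^2\,\diff t\le C(\phi,M_{\mathcal P},\underline\kappa_{\mathcal P},\overline L_{F,\mathcal P},\alpha,T)$ uniformly in $\mu\in\mathcal P$; this is \eqref{eq:uniform-exact-V-bound}.

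Use the same device in your ROM step. The weights you describe are absorbing half of $\kappa(\mu_0)\norm{U_k^0}_V^2$ and applying Young to the data term in $H$. After multiplying by $2\Delta t$, these already leave at least $\frac{L_F(\mu_0)^2}{\kappa(\mu_0)}\Delta t$ in front of $\norm{U_k^0}_H^2$, so they do not in general fit the second condition in \eqref{stability_cond}. Routing the data through the embedding leaves only $\frac{\overline L_{F,\mathcal P}^{\,2}}{2\underline\kappa_{\mathcal P}}\norm{U_k^0}_H^2$ on the right. The first condition $1-E_{\mathcal P}\Delta t\ge\frac12$ then gives \eqref{eq:uniform-rom-recursion}.
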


\begin{proof}
 For each discrete time index \(k\in\{1,2,\dots,m\}\), the triangle inequality and the auxiliary estimates \eqref{inequality:uk-u1k} and \eqref{inequality:U_k-U^1_k} give
\begin{align*}
    \norm{u_1(t_k)-U_k^1}_H 
    \leq\;&
    \norm{u_1(t_k)-u_0(t_k)}_H
    +
    \norm{u_0(t_k)-U_k^0}_H
    +
    \norm{U_k^0-U_k^1}_H
    \\
    \leq\;&
    \norm{u_0(t_k)-U_k^0}_H
    +
    \rho_{\mathcal P}(\delta)
    \left(
        \Lambda_{\mathcal P}
        e^{E_{\mathcal P}T}
        \int_0^T
            e^{-E_{\mathcal P}s}
            \left(1+\norm{u_0(s)}_V^2\right)
        \diff s
    \right)^{\frac12}
    \\
    &\quad
    +
    \rho_{\mathcal P}(\delta)
    \left(
        \Lambda_{\mathcal P}
        e^{2E_{\mathcal P}T}
        \Delta t
        \sum_{q=1}^m
            e^{2E_{\mathcal P}(1-q)\Delta t}
            \left(\norm{U_q^0}_V^2+1\right)
    \right)^{\frac12}
    \\
    =\;&
    \norm{u_0(t_k)-U_k^0}_H
    +
    C_1\rho_{\mathcal P}(\delta),
\end{align*}
where the stability constant \(C_1\) is explicitly defined by
\begin{equation}
\label{C1}
\begin{aligned}
C_1
:=
\left(
        \Lambda_{\mathcal P}
        e^{E_{\mathcal P}T}
        \int_0^T
            e^{-E_{\mathcal P}s}
            \left(1+\norm{u_0(s)}_V^2\right)
        \diff s
\right)^{\frac12} +
\left(
        \Lambda_{\mathcal P}
        e^{2E_{\mathcal P}T}
        \Delta t
        \sum_{q=1}^m
            e^{2E_{\mathcal P}(1-q)\Delta t}
            \left(\norm{U_q^0}_V^2+1\right)
\right)^{\frac12}.
\end{aligned}
\end{equation}

We now show that this constant $C_1$ can be bounded independently of \(\ell\) and \(m\).

Define
$  M_{\mathcal P}
    :=
    \|f\|_{L^\infty(0,T;H)}
    +
    \sup_{\mu\in\mathcal P}\|F(0;\mu)\|_H $. Indeed, \(f\in C([0,T];H)\),
and for any fixed \(\mu_*\in\mathcal P\), condition (F2) gives
$    \|F(0;\mu)\|_H
    \le
    \|F(0;\mu_*)\|_H
    +
    \omega_F^{\mathcal P}(\operatorname{diam}\mathcal P),
    $ for any $\mu\in\mathcal P$.
So the quantity \(M_{\mathcal P}\) is finite. Let $ C_f
    :=
    \frac{2\alpha M_{\mathcal P}^2}
         {\underline\kappa_{\mathcal P}}$ .
         
Testing the continuous problem with \(u_\mu(t)\) and using coercivity,
(F1), the embedding \(\norm{v}_H^2\le \alpha\norm{v}_V^2\), and Young's
inequality gives, for any \(t\in(0,T)\),
\begin{align*}
    \frac12\frac{\diff}{\diff t}\norm{u_\mu(t)}_H^2
    +
    \underline\kappa_{\mathcal P}\norm{u_\mu(t)}_V^2
    &\le
    \norm{f(t)}_H\norm{u_\mu(t)}_H
    +
    \norm{F(u_\mu(t);\mu)}_H\norm{u_\mu(t)}_H
    \\
    &\le
    M_{\mathcal P}\norm{u_\mu(t)}_H
    +
    \overline L_{F,\mathcal P}
    \norm{u_\mu(t)}_V\norm{u_\mu(t)}_H
    \\
    &\le
    \frac{\underline\kappa_{\mathcal P}}{4}
    \norm{u_\mu(t)}_V^2
    +
    \frac{\alpha M_{\mathcal P}^2}
         {\underline\kappa_{\mathcal P}}
    +
    \frac{\underline\kappa_{\mathcal P}}{2}
    \norm{u_\mu(t)}_V^2
    +
    \frac{\overline L_{F,\mathcal P}^{\,2}}
         {2\underline\kappa_{\mathcal P}}
    \norm{u_\mu(t)}_H^2 .
\end{align*}
Equivalently,
\begin{equation}\label{eq:uniform-exact-energy}
    \frac{\diff}{\diff t}\norm{u_\mu(t)}_H^2
    +
    \frac{\underline\kappa_{\mathcal P}}{2}
    \norm{u_\mu(t)}_V^2
    \le
    E_{\mathcal P}\norm{u_\mu(t)}_H^2
    +
    C_f,
\end{equation}
By Gronwall's inequality,
\[
    \norm{u_\mu(t)}_H^2
    \le
    e^{E_{\mathcal P}T}
    \left(
        \norm{\phi}_H^2+C_fT
    \right)
   ,
    \qquad 0\le t\le T,\quad \mu\in\mathcal P .
\]
Integrating \eqref{eq:uniform-exact-energy} over \((0,T)\), we further obtain
\begin{equation}
\label{eq:uniform-exact-V-bound}
 \begin{aligned}
    \frac{\underline\kappa_{\mathcal P}}{2}
    \int_0^T \norm{u_\mu(t)}_V^2\,\diff t
    &\le
    \norm{\phi}_H^2
    -
    \norm{u_\mu(T)}_H^2
    +
    E_{\mathcal P}
    \int_0^T \norm{u_\mu(t)}_H^2\,\diff t
    +
    C_fT
    \\
    &\le
    \norm{\phi}_H^2
    +
     E_{\mathcal P}T e^{E_{\mathcal P}T}   \left(
        \norm{\phi}_H^2+C_fT
    \right)
    +
    C_fT =: M_{\rm ex}.
\end{aligned}   
\end{equation}
Hence the first term in \eqref{C1} is bounded and independent of $\ell$ and $m$.  We next derive the corresponding uniform bound for the reduced solutions part of $\eqref{C1}$. Testing the backward Euler--Galerkin scheme with \(\psi=U_k^\mu\) yields
\begin{align*}
    \frac{1}{2\Delta t}
    \left(
        \norm{U_k^\mu}_H^2
        -
        \norm{U_{k-1}^\mu}_H^2
    \right)
    +
    \underline\kappa_{\mathcal P}\norm{U_k^\mu}_V^2
    &\le
    \norm{f(t_k)}_H\norm{U_k^\mu}_H
    +
    \norm{F(U_k^\mu;\mu)}_H\norm{U_k^\mu}_H
    \\
    &\le
    M_{\mathcal P}\norm{U_k^\mu}_H
    +
    \overline L_{F,\mathcal P}
    \norm{U_k^\mu}_V\norm{U_k^\mu}_H
    \\
    &\le
    \frac{\underline\kappa_{\mathcal P}}{4}
    \norm{U_k^\mu}_V^2
    +
    \frac{\alpha M_{\mathcal P}^2}
         {\underline\kappa_{\mathcal P}}
    +
    \frac{\underline\kappa_{\mathcal P}}{2}
    \norm{U_k^\mu}_V^2
    +
    \frac{\overline L_{F,\mathcal P}^{\,2}}
         {2\underline\kappa_{\mathcal P}}
    \norm{U_k^\mu}_H^2 .
\end{align*}
Therefore,
\begin{equation}\label{eq:uniform-rom-recursion}
    \left(1-E_{\mathcal P}\Delta t\right)
    \norm{U_k^\mu}_H^2
    +
    \frac{\underline\kappa_{\mathcal P}}{2}
    \Delta t\norm{U_k^\mu}_V^2
    \le
    \norm{U_{k-1}^\mu}_H^2
    +
    C_f\Delta t.
\end{equation}
Under the stability condition \eqref{stability_cond},
 $   \frac{1}{1-E_{\mathcal P}\Delta t}
    \le
    1+2E_{\mathcal P}\Delta t
    \le
    e^{2E_{\mathcal P}\Delta t}$.

Since \(U_0^\mu\) is the \(H\)-orthogonal projection of \(\phi\) onto
\(V^\ell\), we have $ \norm{U_0^\mu}_H
    \le
    \norm{\phi}_H$ .
It follows from \eqref{eq:uniform-rom-recursion} and the discrete Gronwall inequality that
\[
    \max_{0\le k\le m}\norm{U_k^\mu}_H^2
    \le
    e^{2E_{\mathcal P}T}
    \left(
        \norm{\phi}_H^2+C_fT
    \right).
\]
Summing \eqref{eq:uniform-rom-recursion} over \(k=1,\ldots,m\) gives
\begin{equation}
\label{eq:uniform-rom-V-bound}
    \begin{aligned}
    \frac{\underline\kappa_{\mathcal P}}{2}
    \Delta t
    \sum_{k=1}^m\norm{U_k^\mu}_V^2
    &\le
    \sum_{k=1}^m
    \left(
        \norm{U_{k-1}^\mu}_H^2
        -
        \norm{U_k^\mu}_H^2
    \right)
    +
    E_{\mathcal P}\Delta t
    \sum_{k=1}^m\norm{U_k^\mu}_H^2
    +
    C_fT
    \\
    &\le
    \norm{\phi}_H^2
    +
    E_{\mathcal P}T e^{2E_{\mathcal P}T}
    \left(
        \norm{\phi}_H^2+C_fT
    \right)
    +
    C_fT =:
    M_{\rm rom},
\end{aligned}
\end{equation}
which is bounded independent of $\ell,m$. Combining \eqref{eq:uniform-exact-V-bound},
\eqref{eq:uniform-rom-V-bound} and substituting into \eqref{C1}, we obtain that $C_1$ is bounded by
\begin{align*}
    C_1
    &\le
    \left(
        \Lambda_{\mathcal P}
        e^{E_{\mathcal P}T}
        \left(
            T+\frac{2M_{\rm ex}}{\underline\kappa_{\mathcal P}}
        \right)
    \right)^{\frac{1}{2}}
    +
    \left(
        \Lambda_{\mathcal P}
        e^{2E_{\mathcal P}T}
        \left(
            T+\frac{2M_{\rm rom}}{\underline\kappa_{\mathcal P}}
        \right)
    \right)^{\frac{1}{2}}
    =:
    \widehat C,
\end{align*}
where $\widehat C$ is independent of $\mu_0,\mu_1,\ell,m$.

Taking the root-mean-square norm over \(k=1,\ldots,m\) and applying Minkowski's inequality gives
\[
\left(\frac1m\sum_{k=1}^m \|u_1(t_k)-U_k^1\|_H^2\right)^{\frac{1}{2}}
\le
\left(\frac1m\sum_{k=1}^m \|u_0(t_k)-U_k^0\|_H^2\right)^{\frac{1}{2}}
+\widehat C\rho_{\mathcal P}(\delta).
\]
This completes the proof.

\end{proof}

To complete the total error estimate, we now analyze the standard POD approximation error $\frac1m\sum_{k=1}^m\norm{u_0(t_k)-U_k^0}_H^2$ for the reference system. The theoretical framework here extends the approach from \cite{gu2021error} to the more general case of nonlinear evolution equations of the form \eqref{PDE:nonlinear} at the fixed reference parameter $\mu_0$. Furthermore, while the error estimates in \cite{gu2021error} utilize the equivalence of norms in the finite-dimensional POD subspace, which makes the error constants dependent on the POD dimension $\ell$, our analysis avoids this step. By using the coercivity of the bilinear form to bound the higher-order terms induced by the nonlinear operator, we ensure that our global error constants ($\tilde{C}$) remain independent of $\ell$. This dimension-independence is essential for establishing a robust sensitivity error estimate.

For the fixed reference parameter \(\mu_0\), we define the Ritz projection
operator $P^\ell:=P_{\mu_0}^\ell:V\to V^\ell$ by assigning to each  \(v\in V\) the unique element \(P^\ell v\in V^\ell\) that satisfies
\begin{equation}\label{eqn:ritz_projection}
    a(P^\ell v,\psi;\mu_0)
    =
    a(v,\psi;\mu_0),
    \qquad
    \forall \psi\in V^\ell .
\end{equation}
The operator \(P^\ell\) is well defined and bounded by the continuity and coercivity of
\(a(\cdot,\cdot;\mu_0)\) on \(V\). Moreover, it satisfies the stability
estimate
\begin{equation}\label{eqn:ritz_stability}
    \norm{P^\ell v}_V
    \le
    \frac{\beta(\mu_0)}{\kappa(\mu_0)}
    \norm{v}_V,
    \qquad
    \forall v\in V .
\end{equation}
Using the same argument as in Lemmas 3 and 4 of \cite{kunisch2001galerkin}, together with the quasi-optimality of the Ritz projection, we obtain the following POD projection estimate.

\begin{lemma}\label{lem:POD basis error}
The projection errors for the reference snapshots and their discrete time
derivatives satisfy the following bounds with respect to the Ritz projection
\(P^\ell=P_{\mu_0}^\ell\):
\begin{equation}\label{eqn:pod_ritz_solution_error}
    \frac1m\sum_{k=0}^m
    \norm{u_0(t_k)-P^\ell u_0(t_k)}_V^2
    \le
    3\left(\frac{\beta(\mu_0)}{\kappa(\mu_0)}\right)^2
    \sum_{j=\ell+1}^n\lambda_j,
\end{equation}
and
\begin{equation}\label{eqn:pod_ritz_derivative_error}
    \frac1m\sum_{k=1}^m
    \norm{\op u_0(t_k)-P^\ell\op u_0(t_k)}_V^2
    \le
    3\left(\frac{\beta(\mu_0)}{\kappa(\mu_0)}\right)^2
    \sum_{j=\ell+1}^n\lambda_j.
\end{equation}
Here \(\lambda_j\) denotes the \(j\)-th positive eigenvalue of the snapshot
correlation matrix $K$ associated with the POD ensemble constructed at
\(\mu_0\).
\end{lemma}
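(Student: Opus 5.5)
The plan is a two-step reduction: first to the $V$-orthogonal projection onto $V^\ell$, then to the POD identity. Write $\Pi^\ell v:=\sum_{k=1}^\ell\langle v,\psi_k\rangle_V\psi_k$ for this projection. Since the basis is $V$-orthonormal, $\Pi^\ell v$ is the best approximation of $v$ from $V^\ell$ in the $V$-norm.

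\emph{Step 1 (quasi-optimality of the Ritz projection).} For any $v\in V$ and any $w\in V^\ell$, Galerkin orthogonality $a(v-P^\ell v,\psi;\mu_0)=0$ for $\psi\in V^\ell$, together with (A1) and (A2), gives
\[
\kappa(\mu_0)\norm{v-P^\ell v}_V^2\le a(v-P^\ell v,v-w;\mu_0)\le \beta(\mu_0)\norm{v-P^\ell v}_V\norm{v-w}_V .
\]
Hence $\norm{v-P^\ell v}_V\le \frac{\beta(\mu_0)}{\kappa(\mu_0)}\norm{v-\Pi^\ell v}_V$. It therefore suffices to prove both estimates with $P^\ell$ replaced by $\Pi^\ell$ and the constant $(\beta/\kappa)^2$ removed.

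\emph{Step 2 (link to the eigenvalue tail).} Following Lemmas 3 and 4 of \cite{kunisch2001galerkin}, I use the identity
\[
\sum_{j=0}^{2m}\norm{y_j-\Pi^\ell y_j}_V^2=(2m+1)\sum_{j=\ell+1}^n\lambda_j .
\]
For \eqref{eqn:pod_ritz_solution_error}, the snapshots $y_0,\dots,y_m$ are exactly $u_0(t_0),\dots,u_0(t_m)$, so dropping the derivative snapshots gives
\[
\sum_{k=0}^m\norm{u_0(t_k)-\Pi^\ell u_0(t_k)}_V^2\le (2m+1)\sum_{j>\ell}\lambda_j .
\]
Dividing by $m$ and using $(2m+1)/m\le 3$ for $m\ge1$ yields the factor $3$. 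For \eqref{eqn:pod_ritz_derivative_error}, the derivative snapshots are $y_{m+k}=\op u_0(t_k)$ for $k=1,\dots,m$. Dropping the state snapshots instead gives
\[
\sum_{k=1}^m\norm{\op u_0(t_k)-\Pi^\ell\op u_0(t_k)}_V^2\le(2m+1)\sum_{j>\ell}\lambda_j ,
\]
and the same division by $m$ gives the factor $3$. Combining each bound with Step 1 then gives both claimed inequalities.

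The only potential obstacle is bookkeeping. The normalization in \eqref{eqn:PODerror generalcase} is $1/(2m+1)$, while the lemma averages with $1/m$; the ratio $(2m+1)/m\le3$ is what produces the constant $3$. One must also confirm that the time-derivative snapshots are indexed so that $k=1,\dots,m$ covers them exactly, which it does since $y_{m+k}=\op u_0(t_k)$. No regularity beyond $u_0\in C([0,T];V)$ is needed, because every quantity involved is a finite sum.
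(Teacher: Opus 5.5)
Your proof is correct and follows the route the paper indicates. The paper gives no written-out proof; it only invokes the quasi-optimality of the Ritz projection, $\norm{v-P^\ell v}_V\le \frac{\beta(\mu_0)}{\kappa(\mu_0)}\norm{v-w}_V$ for all $w\in V^\ell$, together with the POD eigenvalue-tail identity from Lemmas 3 and 4 of \cite{kunisch2001galerkin}. Your argument supplies exactly those details: you apply Galerkin orthogonality in the correct slot (so symmetry of the form is not needed), drop the other half of the snapshot ensemble, and use $(2m+1)/m\le 3$, which yields the stated constant.
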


\begin{theorem}\label{thm:POD error analysis}
Let \(\mu_0\in\mathcal P\) be the reference parameter used to construct the
POD space \(V^\ell\). There exists a constant \(\tilde C>0\), depending on
\(u_0\), \(\alpha\), \(\beta(\mu_0)\), \(\kappa(\mu_0)\), \(L_F(\mu_0)\), and
\(T\), but independent of the POD dimension \(\ell\) and the temporal
discretization parameter \(m\), such that
\begin{equation}\label{eqn:reference_pod_error}
    \frac1m\sum_{k=1}^m
    \norm{u_0(t_k)-U_k^0}_H^2
    \le
    \tilde C
    \left(
        \norm{\phi-P^\ell\phi}_H^2
        +
        \sum_{j=\ell+1}^n\lambda_j
        +
        (\Delta t)^2
    \right).
\end{equation}
\end{theorem}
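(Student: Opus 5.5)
We split the error with the Ritz projection $P^\ell=P^\ell_{\mu_0}$ from \eqref{eqn:ritz_projection}:
\[
u_0(t_k)-U_k^0=\big(u_0(t_k)-P^\ell u_0(t_k)\big)+\big(P^\ell u_0(t_k)-U_k^0\big)=:\eta_k+\theta_k ,\qquad \theta_k\in V^\ell .
\]
The first part is handled directly by Lemma~\ref{lem:POD basis error}. Since $\norm{\eta_k}_H^2\le\alpha\norm{\eta_k}_V^2$, \eqref{eqn:pod_ritz_solution_error} gives
\[
\frac1m\sum_{k}\norm{\eta_k}_H^2\le 3\alpha(\beta(\mu_0)/\kappa(\mu_0))^2\sum_{j>\ell}\lambda_j .
\]
Everything therefore reduces to an $H$-stability estimate for $\theta_k$ that avoids inverse inequalities on $V^\ell$. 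This is where the $\ell$-independence comes from.

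\textbf{Error equation for $\theta_k$.} Evaluate \eqref{PDE:nonlinear} at $t=t_k$ with $\mu=\mu_0$, test with $\psi\in V^\ell$, and subtract \eqref{eqn:POD_scheme_mu_i} with $i=0$. The Ritz property cancels $a(\eta_k,\psi;\mu_0)$, which leaves
\[
\langle\op\theta_k,\psi\rangle_H+a(\theta_k,\psi;\mu_0)=\langle \op u_0(t_k)-\partial_t u_0(t_k),\psi\rangle_H-\langle\op\eta_k,\psi\rangle_H-\langle F(u_0(t_k);\mu_0)-F(U_k^0;\mu_0),\psi\rangle_H .
\]
Here $\op\eta_k=\op u_0(t_k)-P^\ell\op u_0(t_k)$, because $P^\ell$ is linear. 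We test with $\psi=\theta_k$ and use $\langle\op\theta_k,\theta_k\rangle_H\ge\frac{1}{2\Delta t}(\norm{\theta_k}_H^2-\norm{\theta_{k-1}}_H^2)$ together with coercivity (A2) at $\mu_0$. The three right-hand terms are bounded as follows.
\begin{itemize}
\item[(i)] Taylor with integral remainder and $u_0\in W^{2,2}(0,T;H)$ give $\norm{\op u_0(t_k)-\partial_tu_0(t_k)}_H^2\le \frac{\Delta t}{3}\int_{t_{k-1}}^{t_k}\norm{\partial_{tt}u_0}_H^2$. After multiplying by $\Delta t$ and summing, this yields the $(\Delta t)^2$ term.
\item[(ii)] The term $\norm{\op\eta_k}_H\le\sqrt\alpha\norm{\op\eta_k}_V$ is controlled by \eqref{eqn:pod_ritz_derivative_error}.
\item[(iii)] By (F1), the nonlinear term is at most $L_F(\mu_0)\big(\norm{\eta_k}_V+\norm{\theta_k}_V\big)\norm{\theta_k}_H$. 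The piece $L_F(\mu_0)\norm{\theta_k}_V\norm{\theta_k}_H$ is absorbed by Young's inequality into $\frac{\kappa(\mu_0)}2\norm{\theta_k}_V^2+\frac{L_F(\mu_0)^2}{2\kappa(\mu_0)}\norm{\theta_k}_H^2$. The piece $L_F(\mu_0)\norm{\eta_k}_V\norm{\theta_k}_H$ goes to $\frac{L_F(\mu_0)}{2}(\norm{\eta_k}_V^2+\norm{\theta_k}_H^2)$.
\end{itemize}
For (i) and (ii) we use $ab\le\frac12a^2+\frac12b^2$ with $b=\norm{\theta_k}_H$. Collecting terms, the coefficient of $\Delta t\norm{\theta_k}_H^2$ is exactly $1+L_F(\mu_0)+\frac{L_F(\mu_0)^2}{2\kappa(\mu_0)}$. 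The second condition in \eqref{stability_cond} makes this absorbable, giving
\[
\norm{\theta_k}_H^2\le (1+c\Delta t)\Big(\norm{\theta_{k-1}}_H^2+\Delta t\,R_k\Big),
\]
where $R_k$ collects $\norm{\op u_0-\partial_tu_0}_H^2$, $\alpha\norm{\op\eta_k}_V^2$ and $L_F(\mu_0)\norm{\eta_k}_V^2$.

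\textbf{Closing the recursion.} The discrete Gronwall argument of Lemma~\ref{lem:Uk-U1k} gives
\[
\max_k\norm{\theta_k}_H^2\le e^{cT}\Big(\norm{\theta_0}_H^2+\Delta t\sum_k R_k\Big).
\]
Since $\Delta t=T/m$, the sum $\Delta t\sum_k R_k$ is $T$ times the averages in Lemma~\ref{lem:POD basis error}, plus $C(\Delta t)^2\norm{\partial_{tt}u_0}_{L^2(0,T;H)}^2$. For the initial term, $U_0^0$ is the $H$-projection of $\phi$ and $\theta_0=P^\ell\phi-U_0^0$. Hence
\[
\norm{\theta_0}_H\le\norm{P^\ell\phi-\phi}_H+\norm{\phi-U_0^0}_H\le 2\norm{\phi-P^\ell\phi}_H .
\]
Averaging $\norm{\eta_k+\theta_k}_H^2\le 2\norm{\eta_k}_H^2+2\norm{\theta_k}_H^2$ over $k$ yields the claimed bound, with $\tilde C$ depending only on $\alpha,\beta(\mu_0),\kappa(\mu_0),L_F(\mu_0),T$ and $\norm{\partial_{tt}u_0}_{L^2(0,T;H)}$.

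\textbf{Main obstacle.} The delicate point is the nonlinear term. It must not require bounding $\norm{\theta_k}_V$ by $\norm{\theta_k}_H$ on $V^\ell$, which is what produces $\ell$-dependence in \cite{gu2021error}. The cross-space bound (F1) only gives a $V$-norm of $\theta_k$, so it must be absorbed by the coercive term $\kappa(\mu_0)\norm{\theta_k}_V^2$ that we keep on the left-hand side. This absorption is why the constant $L_F(\mu_0)^2/(2\kappa(\mu_0))$ appears in the step-size condition.
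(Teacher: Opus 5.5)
Your proposal is essentially the paper's proof. It uses the same Ritz splitting (your $\eta_k,\theta_k$ are the paper's $\varrho_k,\vartheta_k$), the same error equation, and the same absorption of $L_F(\mu_0)\norm{\theta_k}_V\norm{\theta_k}_H$ into the coercive term, which is what keeps $\tilde C$ independent of $\ell$. The Taylor-remainder and discrete Gronwall steps also match. Your bound $\norm{\theta_0}_H\le 2\norm{\phi-P^\ell\phi}_H$ is just a slightly weaker form of the paper's Pythagorean bound.

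There is one bookkeeping slip. With the Young weights you actually describe (separate $\frac12$-splits for (i) and (ii), and $\frac{\kappa(\mu_0)}{2}$ in (iii)), multiplying by $2\Delta t$ gives the coefficient $2+L_F(\mu_0)+L_F(\mu_0)^2/\kappa(\mu_0)$. It is not $1+L_F(\mu_0)+\frac{L_F(\mu_0)^2}{2\kappa(\mu_0)}$ as you claim. For that larger coefficient $c$, the second condition in \eqref{stability_cond} only guarantees $1-c\Delta t\ge 0$, which is not enough for a uniform factor $1+c'\Delta t$.

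The fix is to treat (i) and (ii) as one term $\norm{r_k}_H\norm{\theta_k}_H$, with $r_k:=\op u_0(t_k)-\partial_t u_0(t_k)-\op\eta_k$. Then absorb the full $\kappa(\mu_0)\norm{\theta_k}_V^2$ using the weight $L_F(\mu_0)^2/(4\kappa(\mu_0))$, as the paper does. Alternatively, simply impose a correspondingly smaller $\Delta t$.
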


\begin{proof}
For each time index \(k=0,1,\ldots,m\), we decompose the error into two
components $u_0(t_k)-U_k^0=\varrho_k+\vartheta_k$, where
$\varrho_k:=u_0(t_k)-P^\ell u_0(t_k), 
    \vartheta_k:=P^\ell u_0(t_k)-U_k^0$.
This decomposition yields the estimate
\begin{equation}\label{eqn:4.4}
    \frac1m\sum_{k=1}^m
    \norm{u_0(t_k)-U_k^0}_H^2
    \le
    \frac2m\sum_{k=1}^m\norm{\vartheta_k}_H^2
    +
    \frac2m\sum_{k=1}^m\norm{\varrho_k}_H^2 .
\end{equation}
By Lemma \ref{lem:POD basis error}, the projection error term satisfies
\begin{equation}\label{eqn:4.5}
    \frac2m\sum_{k=1}^m\norm{\varrho_k}_H^2
    \le
    \frac{2\alpha}{m}\sum_{k=1}^m\norm{\varrho_k}_V^2
    \le
    6\alpha \left(\frac{\beta(\mu_0)}{\kappa(\mu_0)}\right)^2
    \sum_{j=\ell+1}^n\lambda_j,
\end{equation}
where the first inequality follows from the continuous embedding
\(V\hookrightarrow H\), and the second inequality follows from
Lemma \ref{lem:POD basis error}.

Now consider the discrete time derivative $\op\vartheta_k
    =
    {(\vartheta_k-\vartheta_{k-1})}/{\Delta t}.$ Subtracting the POD scheme \eqref{eqn:POD_scheme_mu_i} with \(i=0\) from the
continuous problem \eqref{PDE:nonlinear} at the parameter \(\mu_0\), and using
the Ritz projection identity, we obtain for any test function
\(\psi\in V^\ell\) the error equation
\begin{equation}\label{eqn:theta_error_equation}
    \langle \op\vartheta_k,\psi\rangle_H
    +
    a(\vartheta_k,\psi;\mu_0)
    =
    -\langle v_k,\psi\rangle_H
    -
    \langle F(u_0(t_k);\mu_0)-F(U_k^0;\mu_0),\psi\rangle_H,
\end{equation}
where
$    v_k
    :=
    \partial_t u_0(t_k)-P^\ell\op u_0(t_k)
    =
    w_k+\eta_k,$
with
\begin{equation*}
    w_k:=\partial_t u_0(t_k)-\op u_0(t_k),
    \qquad
    \eta_k:=\op u_0(t_k)-P^\ell\op u_0(t_k).
\end{equation*}
Selecting the test function \(\psi=\vartheta_k\in V^\ell\) and applying the
coercivity condition yields
\begin{equation}\label{eqn:theta_energy_initial}
\begin{aligned}
    \frac1{\Delta t}
    \left(
        \norm{\vartheta_k}_H^2
        -
        \langle \vartheta_k,\vartheta_{k-1}\rangle_H
    \right)
    +
    \kappa(\mu_0)\norm{\vartheta_k}_V^2
    \le
    &
    \norm{F(u_0(t_k);\mu_0)-F(U_k^0;\mu_0)}_H
    \norm{\vartheta_k}_H
    +
    \norm{\vartheta_k}_H\norm{v_k}_H .
\end{aligned}
\end{equation}

Using the cross-space Lipschitz condition \eqref{lipschitz in function} and
Young's inequality, we estimate the nonlinear term:
\begin{equation}\label{eqn:nonlinear_reference_estimate}
\begin{aligned}
\norm{F(u_0(t_k);\mu_0)-F(U_k^0;\mu_0)}_H
&\norm{\vartheta_k}_H
 \le 
L_F(\mu_0)\norm{u_0(t_k)-U_k^0}_V\norm{\vartheta_k}_H
\\
&\le
L_F(\mu_0)
\left(
    \norm{\vartheta_k}_V+\norm{\varrho_k}_V
\right)
\norm{\vartheta_k}_H
\\
&
\le
\frac{L_F(\mu_0)}{2}\norm{\varrho_k}_V^2
+
\kappa(\mu_0)\norm{\vartheta_k}_V^2
+
\left(
    \frac{L_F(\mu_0)}{2}
    +
    \frac{L_F(\mu_0)^2}{4\kappa(\mu_0)}
\right)
\norm{\vartheta_k}_H^2 .
\end{aligned}
\end{equation}

Define $   C_2
    :=
    L_F(\mu_0)
    +
    \frac{L_F(\mu_0)^2}{2\kappa(\mu_0)} $. Combining \eqref{eqn:theta_energy_initial} and
\eqref{eqn:nonlinear_reference_estimate}, and simplifying, we obtain
\begin{equation}\label{eqn:theta_before_recursion}
    \norm{\vartheta_k}_H^2
    -
    \langle \vartheta_k,\vartheta_{k-1}\rangle_H
    \le
    \Delta t
    \left(
        \norm{\vartheta_k}_H\norm{v_k}_H
        +
        \frac{L_F(\mu_0)}{2}\norm{\varrho_k}_V^2
        +
        \frac{C_2}{2}\norm{\vartheta_k}_H^2
    \right).
\end{equation}

Applying Young's inequality to the terms
\(\langle \vartheta_k,\vartheta_{k-1}\rangle_H\) and
\(\norm{\vartheta_k}_H\norm{v_k}_H\), we get
\begin{equation}\label{eqn:theta_stability_raw}
    \left(1-(1+C_2)\Delta t\right)\norm{\vartheta_k}_H^2
    \le
    \norm{\vartheta_{k-1}}_H^2
    +
    \Delta t
    \left(
        \norm{v_k}_H^2
        +
        L_F(\mu_0)\norm{\varrho_k}_V^2
    \right).
\end{equation}
By the stability condition \eqref{stability_cond}, we have $1-(1+C_2)\Delta t>\frac12$.
Set $C_3:=2(1+C_2)$.
Then \eqref{eqn:theta_stability_raw} gives the recursive estimate
\begin{equation}\label{eqn:vartheta iteration}
    \norm{\vartheta_k}_H^2
    \le
    (1+C_3\Delta t)
    \left(
        \norm{\vartheta_{k-1}}_H^2
        +
        \Delta t
        \left(
            \norm{v_k}_H^2
            +
            L_F(\mu_0)\norm{\varrho_k}_V^2
        \right)
    \right).
\end{equation}
Iterating this inequality from \(k=1\) to \(m\) and using
\((1+C_3\Delta t)^k\le e^{C_3T}\), we obtain
\begin{equation}\label{eqn:vartheta sum interation}
    \norm{\vartheta_k}_H^2
    \le
    e^{C_3T}
    \left(
        \norm{\vartheta_0}_H^2
        +
        \Delta t
        \sum_{j=1}^k
        \left(
            \norm{v_j}_H^2
            +
            L_F(\mu_0)\norm{\varrho_j}_V^2
        \right)
    \right).
\end{equation}

The initial error satisfies
\begin{equation}\label{eqn:initial_theta_bound}
\begin{aligned}
    \norm{\vartheta_0}_H^2
    &=
    \norm{P^\ell u_0(t_0)-U_0^0}_H^2
    =
    \norm{P^\ell\phi-U_0^0}_H^2
    =
    \norm{U_0^0}_H^2
    +
    \norm{P^\ell\phi}_H^2
    -
    2\langle U_0^0,P^\ell\phi\rangle_H
    \\
    &\le
    \norm{\phi}_H^2
    +
    \norm{P^\ell\phi}_H^2
    -
    2\langle \phi,P^\ell\phi\rangle_H
    =
    \norm{\phi-P^\ell\phi}_H^2,
\end{aligned}
\end{equation}
where we have used the fact that \(U_0^0\) is the \(H\)-orthogonal projection
of \(\phi\) onto \(V^\ell\).

Summing inequality \eqref{eqn:vartheta sum interation} over
\(k=1,\ldots,m\), we obtain
\begin{equation}\label{eqn:vartheta_sum_before_bounds}
\begin{aligned}
    \sum_{k=1}^m\norm{\vartheta_k}_H^2
    \le
    e^{C_3T}
    \Bigg(
        m\norm{\phi-P^\ell\phi}_H^2
        +
        T\sum_{j=1}^m
        \left(
            \norm{v_j}_H^2
            +
            L_F(\mu_0)\norm{\varrho_j}_V^2
        \right)
    \Bigg).
\end{aligned}
\end{equation}
Since \(v_j=w_j+\eta_j\), we further have
\begin{equation}\label{eqn:vartheta_sum_split_v}
\begin{aligned}
    \sum_{k=1}^m\norm{\vartheta_k}_H^2
    \le
    e^{C_3T}
    \Bigg(
        m\norm{\phi-P^\ell\phi}_H^2
        +
        T\sum_{j=1}^m
        \left(
            2\norm{w_j}_H^2
            +
            2\norm{\eta_j}_H^2
            +
            L_F(\mu_0)\norm{\varrho_j}_V^2
        \right)
    \Bigg).
\end{aligned}
\end{equation}

The temporal discretization error admits the bound
\begin{equation}\label{eqn:time_discretization_error}
\begin{aligned}
    \sum_{j=1}^m\norm{w_j}_H^2
    &=
    \sum_{j=1}^m
    \norm{\partial_t u_0(t_j)-\op u_0(t_j)}_H^2 =
    \sum_{j=1}^m
    \left\|
        \frac1{\Delta t}
        \int_{t_{j-1}}^{t_j}
        (t-t_{j-1})\partial_{tt}u_0(t)\diff t
    \right\|_H^2
    \\
    &\le
    \sum_{j=1}^m
    \frac1{(\Delta t)^2}
    \int_{t_{j-1}}^{t_j}
    (t-t_{j-1})^2\diff t
    \int_{t_{j-1}}^{t_j}
    \norm{\partial_{tt}u_0(t)}_H^2\diff t
    \\
    &\le
    \frac{\Delta t}{3}
    \int_0^T \norm{\partial_{tt}u_0(t)}_H^2\diff t.
\end{aligned}
\end{equation}

Applying Lemma \ref{lem:POD basis error} to bound the projection errors
\(\norm{\eta_j}_H^2\) and \(\norm{\varrho_j}_V^2\), we conclude
\begin{equation}\label{eqn:4.8}
\begin{aligned}
    \sum_{k=1}^m\norm{\vartheta_k}_H^2
    \le
    e^{C_3T}
    \Bigg(
        &
        m\norm{\phi-P^\ell\phi}_H^2
        +
        \frac{2T\Delta t}{3}
        \int_0^T \norm{\partial_{tt}u_0(t)}_H^2\diff t
        \\&
        +
        3\left(\frac{\beta(\mu_0)}{\kappa(\mu_0)}\right)^2Tm
        \left(
            2\alpha+L_F(\mu_0)
        \right)
        \sum_{j=\ell+1}^n\lambda_j
    \Bigg).
\end{aligned}
\end{equation}

Combining \eqref{eqn:4.4}, \eqref{eqn:4.5}, and \eqref{eqn:4.8}, we obtain
the final error bound  \eqref{eqn:reference_pod_error}.

\end{proof}

Combining the parametric sensitivity estimate in
Theorem \ref{thm:sensitive analysis} with the POD error estimate at the
reference parameter in Theorem \ref{thm:POD error analysis}, we obtain the
main error estimate for the parameterized POD reduced-order model.

\label{section: main thm}
\begin{theorem}\label{thm:main theorem}
Let \(\mu_0,\mu_1\in\mathcal P\),
and let \(V^\ell\) be the POD space constructed from the snapshots at the
reference parameter \(\mu_0\). 
Then the POD approximation at the query parameter \(\mu_1\), computed in the
same reduced space \(V^\ell\), satisfies
\begin{equation}\label{finalerror}
\begin{aligned}
    \left(
        \frac1m\sum_{k=1}^m
        \norm{u_1(t_k)-U_k^1}_H^2
    \right)^{1/2}
    \le
    \tilde C^{1/2}
    \left(
        \norm{\phi-P^\ell\phi}_H^2
        +
        \sum_{j=\ell+1}^n\lambda_j
        +
        (\Delta t)^2
    \right)^{1/2}
    +
    \widehat C\rho_{\mathcal P}(\delta).
\end{aligned}
\end{equation}
where  \(\tilde C \) and  \(\widehat  C\) are
independent of the POD dimension \(\ell\), the temporal discretization
parameter \(m\), and the perturbation magnitude
\(\delta\).
\end{theorem}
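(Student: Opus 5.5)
The plan is to combine Theorem~\ref{thm:sensitive analysis} with Theorem~\ref{thm:POD error analysis}. No new estimates are needed; the work is in checking that the constants keep the advertised independence.

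First, I invoke Theorem~\ref{thm:sensitive analysis} for the pair $\mu_0,\mu_1$. It bounds the root-mean-square error at the query parameter by the root-mean-square error at the reference parameter plus $\widehat C\rho_{\mathcal P}(\delta)$. The constant $\widehat C$ is already shown there to be independent of $\ell$, $m$, $\delta$ and of the parameter pair. This independence relies on the uniform energy bounds $M_{\rm ex}$ and $M_{\rm rom}$ and on the stability condition \eqref{stability_cond}.

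Second, I bound the reference term using Theorem~\ref{thm:POD error analysis}:
\[
\frac1m\sum_{k=1}^m\norm{u_0(t_k)-U_k^0}_H^2\le \tilde C\Big(\norm{\phi-P^\ell\phi}_H^2+\sum_{j=\ell+1}^n\lambda_j+(\Delta t)^2\Big).
\]
Taking square roots, which is monotone, and substituting into \eqref{eqn: sensitivity d1<d+cr} gives \eqref{finalerror} directly.

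The remaining point is to verify that both constants have the claimed independence. The constant $\tilde C$ depends only on $u_0$, $\alpha$, $\beta(\mu_0)$, $\kappa(\mu_0)$, $L_F(\mu_0)$ and $T$. Since $\mu_0$ is fixed, this makes $\tilde C$ independent of $\ell$, $m$ and $\delta$.

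I expect this bookkeeping step to be the main obstacle, for two reasons. First, the proof of Theorem~\ref{thm:POD error analysis} ends with \eqref{eqn:4.8}, which carries a time contribution of order $\Delta t\int_0^T\norm{\partial_{tt}u_0}_H^2$ after division by $m$. I need to reconcile this with the $(\Delta t)^2$ stated in \eqref{eqn:reference_pod_error}. If the bound actually delivers only $\Delta t$, the $(\Delta t)^2$ in the final statement should be read as the statement of Theorem~\ref{thm:POD error analysis} itself, which I will simply invoke as an earlier result. Second, I must check that both stability conditions in \eqref{stability_cond} hold simultaneously, since each of the two theorems uses one of them. They do, because \eqref{stability_cond} is assumed globally throughout this section.
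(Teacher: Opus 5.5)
Your proposal is correct and matches the paper's proof: take square roots in Theorem~\ref{thm:POD error analysis}, substitute into Theorem~\ref{thm:sensitive analysis}, and inherit the independence of $\tilde C$ and $\widehat C$ from those two results. Your worry about the time term is unfounded. Dividing \eqref{eqn:4.8} by $m$ and using $T/m=\Delta t$ turns the factor $\frac{2T\Delta t}{3}$ into $\frac{2}{3}(\Delta t)^2$, so Theorem~\ref{thm:POD error analysis} genuinely delivers the $(\Delta t)^2$ term.
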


\begin{proof}

Taking square roots in Theorem~\ref{thm:POD error analysis} and substituting the resulting reference-parameter bound into Theorem~\ref{thm:sensitive analysis} yields \eqref{finalerror}. The stated independence of \(\tilde C\) and \(\widehat C\) is inherited from those two
theorems.

\end{proof}

\section{Numerical Results}
\label{section: numerical part}
In this section, we illustrate our theoretical sensitivity estimates through
numerical experiments. We consider three representative classes of evolution
problems: a convection--diffusion equation with homogeneous Dirichlet boundary
conditions, the mean-free viscous G-equation, and the mean-free viscous
G-equation with strain. The G-equation models are posed with periodic
boundary conditions and are formulated on the zero-mean space as in \cite{gu2021error}. 

Throughout this section, \(\mu\) denotes the scalar parameter varied in a
given test. The POD basis is constructed from snapshots at a reference
parameter \(\mu_0\), and the same reduced space is then reused to compute
POD-ROM solutions at query parameters \(\mu_1\). We write $\delta:=|\mu_1-\mu_0|$.

For each \(\mu_1\), we report the error between the
full-order solution and the POD-ROM solution,
\[
    E_{\rm POD}(\mu_1)
    :=
    \left(
        \frac1m\sum_{k=1}^m
        \norm{u_{\mu_1}(t_k)-U_k^{\mu_1}}_H^2
    \right)^{1/2},
\]
where \(U_k^{\mu_1}\) is computed in the POD space generated at
\(\mu_0\).  The parameter modulus \(\rho_{\mathcal P}\) relevant to each
test is identified in the corresponding examples. The numerical results
are used to compare the observed dependence of \(E_{\rm POD}(\mu_1)\) on
\(\delta\) with the modulus-controlled behavior predicted by the sensitivity
analysis. The verification that the model equations satisfy the structural
assumptions of the abstract theory is provided in
\Cref{section: Verification}.

\medskip 
\noindent \textbf{Example 1.}  We first consider the two-dimensional convection--diffusion equation on the
unit square \(\Omega=[0,1]^2\) with homogeneous Dirichlet boundary conditions 
\cite{WU2022114498} as a baseline benchmark problem:
\begin{equation}
\label{convection-diffusion}
\begin{cases}
\dfrac{\partial u}{\partial t}
+
\vec{b}\cdot\nabla u
-
\mu\Delta u
=0,
& \text{in } \Omega\times(0,T],
\\
u(\mathbf x,0)=f_0(\mathbf x),
& \text{in } \Omega,
\\
u(\mathbf x,t)=0,
& \text{on } \partial\Omega\times(0,T].
\end{cases}
\end{equation}
Here the parameter \(\mu>0\) is the diffusion coefficient, and the velocity
field is given by
 $   \vec b(x,y)
    =
    \bigl(\cos(2\pi y),\cos(2\pi x)\bigr)^\top .$
The
parameter dependence is linear in \(\mu\), so the corresponding parameter
modulus is Lipschitz-type
$ \rho_{\mathcal P}(\delta)=O(\delta)$.

For the numerical discretization, we use a uniform \(200\times200\) grid. The
diffusion term is approximated by the standard five-point stencil, and the
transport term is discretized by an upwind finite-difference scheme. Time
integration is performed using the backward Euler scheme with \(m=1000\)
time steps on the interval \([0,T]\), where \(T=1.0\). The initial condition is
chosen as $f_0(x,y)=(x^2-x)(y^2-y)$, which satisfies the homogeneous boundary condition. The POD basis is constructed from the full-order snapshots at the reference parameter $\mu_0=0.05$. We retain \(\ell=15\) POD modes, which capture more than \(99.9\%\) of the
snapshot energy.

The parameter-sensitivity data for this benchmark are reported in
\Cref{tab:CD_order}. Since the diffusion coefficient depends linearly on
\(\mu\), the associated modulus is Lipschitz-type, namely
\(\rho_{\mathcal P}(\delta)=O(\delta)\). The listed errors increase nearly
linearly with the perturbation size, in agreement with the predicted slope-one
behavior up to the reference POD error contribution. In particular, even at
the largest tested query parameter \(\mu_1=0.06\), corresponding to a
\(20\%\) perturbation of \(\mu_0=0.05\), the error remains
\(4.202\times10^{-4}\), indicating stable basis reuse for this benchmark.

\begin{table}[htbp]
\centering
\begingroup
\setlength{\tabcolsep}{2.5pt}
\renewcommand{\arraystretch}{1.08}
\resizebox{\linewidth}{!}{%
\begin{tabular}{@{}l*{9}{c}@{}}
\hline
\(\mu_1\)
& 0.05010 & 0.05020 & 0.05050 & 0.05100 & 0.05200
& 0.05300 & 0.05500 & 0.05750 & 0.06000 \\
\hline
\(\delta=|\mu_1-\mu_0|\)
& \(1.0\times10^{-4}\)
& \(2.0\times10^{-4}\)
& \(5.0\times10^{-4}\)
& \(1.0\times10^{-3}\)
& \(2.0\times10^{-3}\)
& \(3.0\times10^{-3}\)
& \(5.0\times10^{-3}\)
& \(7.5\times10^{-3}\)
& \(1.0\times10^{-2}\) \\

\(E_{\rm POD}(\mu_1)\)
& \(5.070\times10^{-6}\)
& \(1.035\times10^{-5}\)
& \(2.616\times10^{-5}\)
& \(5.201\times10^{-5}\)
& \(1.018\times10^{-4}\)
& \(1.491\times10^{-4}\)
& \(2.367\times10^{-4}\)
& \(3.343\times10^{-4}\)
& \(4.202\times10^{-4}\) \\

Fitted order \(\widehat p_{\rm POD}\)
& \multicolumn{9}{c}{\(\widehat p_{\rm POD}=0.9635\), predicted order \(p=1\)} \\
\hline
\end{tabular}%
}
\endgroup
\caption{
Observed POD-ROM errors for the convection--diffusion test. The POD basis is
constructed at \(\mu_0=0.05\) and reused at the listed query parameters
\(\mu_1\). 
}
\label{tab:CD_order}
\end{table}

\noindent \textbf{Example 2.}
As the second test case, we consider the viscous G-equation on the
two-dimensional periodic domain \(\Omega=[0,1]^2\). The G-equation is a
standard level-set model for flame-front propagation, where the front is
represented by the zero level set $\{(\mathbf x,t):G(\mathbf x,t)=0\}$ and is governed by the following Hamilton-Jacobi type equation:
\begin{equation}\label{eqn:viscous_G}
\begin{cases}
G_t+\vec V_A\cdot\nabla G+S_l|\nabla G|
=
dS_l\Delta G,
& \text{in }\Omega\times(0,T],
\\
G(\mathbf x,0)=\vec P\cdot\mathbf x,
& \text{in }\Omega,
\end{cases}
\end{equation}
where \(S_l=1.0\) is the laminar flame speed, \(d>0\) is the
Markstein number, and \(\vec P=(1,0)\) is the propagation direction. The cellular flow is written as $\vec V_A(\mathbf x)=A\vec V_0(\mathbf x)$,
with $\vec V_0(x,y) = \bigl( -\sin(2\pi x)\cos(2\pi y), \allowbreak \cos(2\pi x)\sin(2\pi y) \bigr)^\top$. 
Here $A>0$ denotes the flow amplitude.

Following the mean-free decomposition in \cite{gu2021error}, we write
$    G(\mathbf x,t)=\vec P\cdot\mathbf x+u(\mathbf x,t)$ and $u=\hat u+\bar u,$ where \(\hat u\) is periodic and mean-free, and
    $\int_\Omega \hat u(\mathbf x,t)\,\diff \mathbf x=0, \
    \bar u(t)=\int_\Omega u(\mathbf x,t)\,\diff\mathbf x$.
The mean-free formulation satisfies
\begin{equation}
\label{meanfree G}
\begin{cases}
    \hat u_t
    +
    \vec V_A\cdot\nabla\hat u
    -
    dS_l\Delta\hat u
    +
    S_l|\vec P+\nabla\hat u|
    -
    \displaystyle\int_{\Omega}S_l|\vec P+\nabla\hat u|\,\diff\mathbf x
    +
    \vec V_A\cdot\vec P
    =
    0,
    \\
    \bar u_t
    =
    -
    \displaystyle\int_{\Omega}S_l|\vec P+\nabla\hat u|\,\diff\mathbf x,
    \\
    \hat u(\mathbf x,0)=0,
    \qquad
    \bar u(0)=0.
\end{cases}
\end{equation}
The POD basis is constructed only from snapshots of the mean-free component
\(\hat u\). The mean component \(\bar u\) is recovered after \(\hat u\) is
computed. This formulation places \(\hat u\) in a zero-mean Hilbert space,
where the coercivity of the diffusion part follows from the
Poincar\'e--Wirtinger inequality. The verification of the corresponding
abstract assumptions is given in \Cref{section: Verification}.

For the numerical discretization, we use a uniform \(81\times81\) grid. The
advection term and the Hamilton--Jacobi term are discretized using WENO5-type
reconstructions with upwinding and a Godunov numerical Hamiltonian,
respectively. The diffusion term is discretized by a standard five-point
central difference stencil. Time integration is performed using the backward
Euler method with \(m=1000\) uniform time steps on \([0,T]\), where
\(T=1.0\). In all G-equation tests below, the reported error
\(E_{\rm POD}(\mu_1)\) is computed for the mean-free component.

\smallskip
\noindent\emph{Markstein number perturbation.}
We first take the Markstein number as the parameter, namely $\mu=d$,
while keeping \(A=4.0\), \(S_l=1.0\), and \(\vec P=(1,0)\) fixed. The POD
basis is constructed at the reference value $\mu_0=0.1$.

We retain \(\ell=12\) POD modes, which capture more than \(99.9\%\) of the
snapshot energy. Since $\mu$ enters the diffusion term linearly and the
remaining nonlinear term is independent of \(\mu\), the associated parameter
modulus is also Lipschitz-type $\rho_{\mathcal P}(\delta)=O(\delta)$.

The numerical results for this test are shown in \Cref{G1,G2}. In
\Cref{G1}, the  error \(E_{\rm POD}(\mu_1)\) is plotted
against \(\delta=|\mu_1-\mu_0|\). The plot
shows that the observed POD-ROM error is consistent with our theory. \Cref{G2} compares the
full-order and POD-ROM solutions at \(T=1.0\) for the reference parameter 
\(\mu_0=0.100\) and a query
 parameter \(\mu_1=0.115\). Notably, even when the perturbation magnitude reaches $15\%$ of the nominal value, the relative error is maintained on the order of $10^{-2}$, demonstrating the robustness of the reduced-order model.

\begin{figure}
    \centering
    \includegraphics[width=0.5\linewidth]{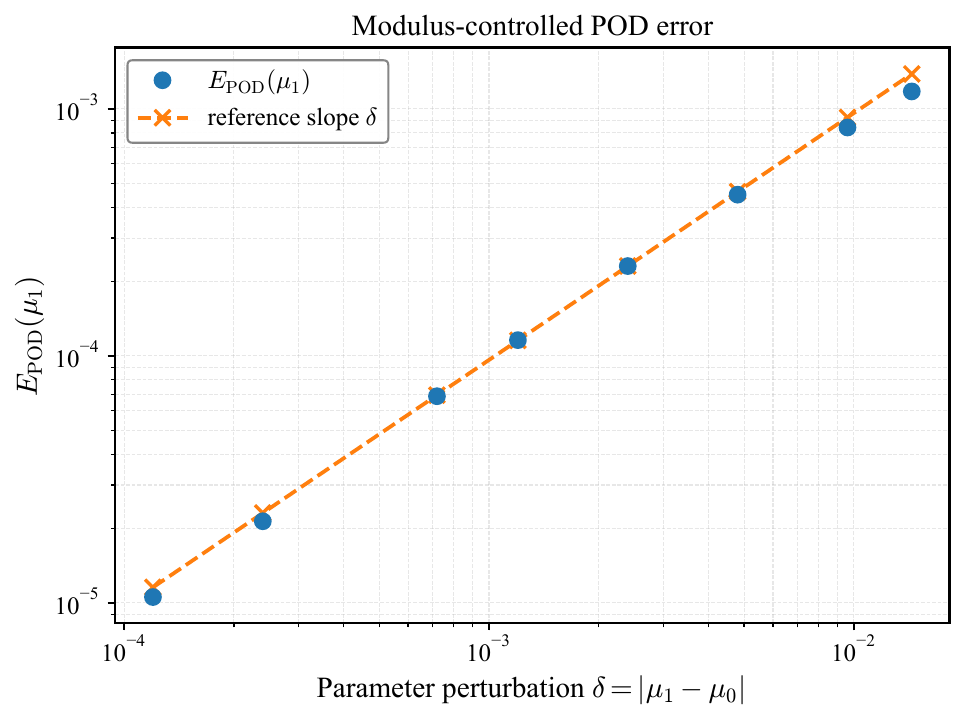}
    \caption{Modulus-controlled POD error for the mean-free viscous G-equation.
    Blue markers show \(E_{\rm POD}(\mu_1)\) versus
    \(\delta=|\mu_1-\mu_0|\). The dashed line is a
    reference slope proportional to \(\delta\), corresponding to the
    Lipschitz-type modulus \(\rho_{\mathcal P}(\delta)=O(\delta)\).}
    \label{G1}
\end{figure}

\begin{figure}
    \centering
    \includegraphics[width=0.8\linewidth]{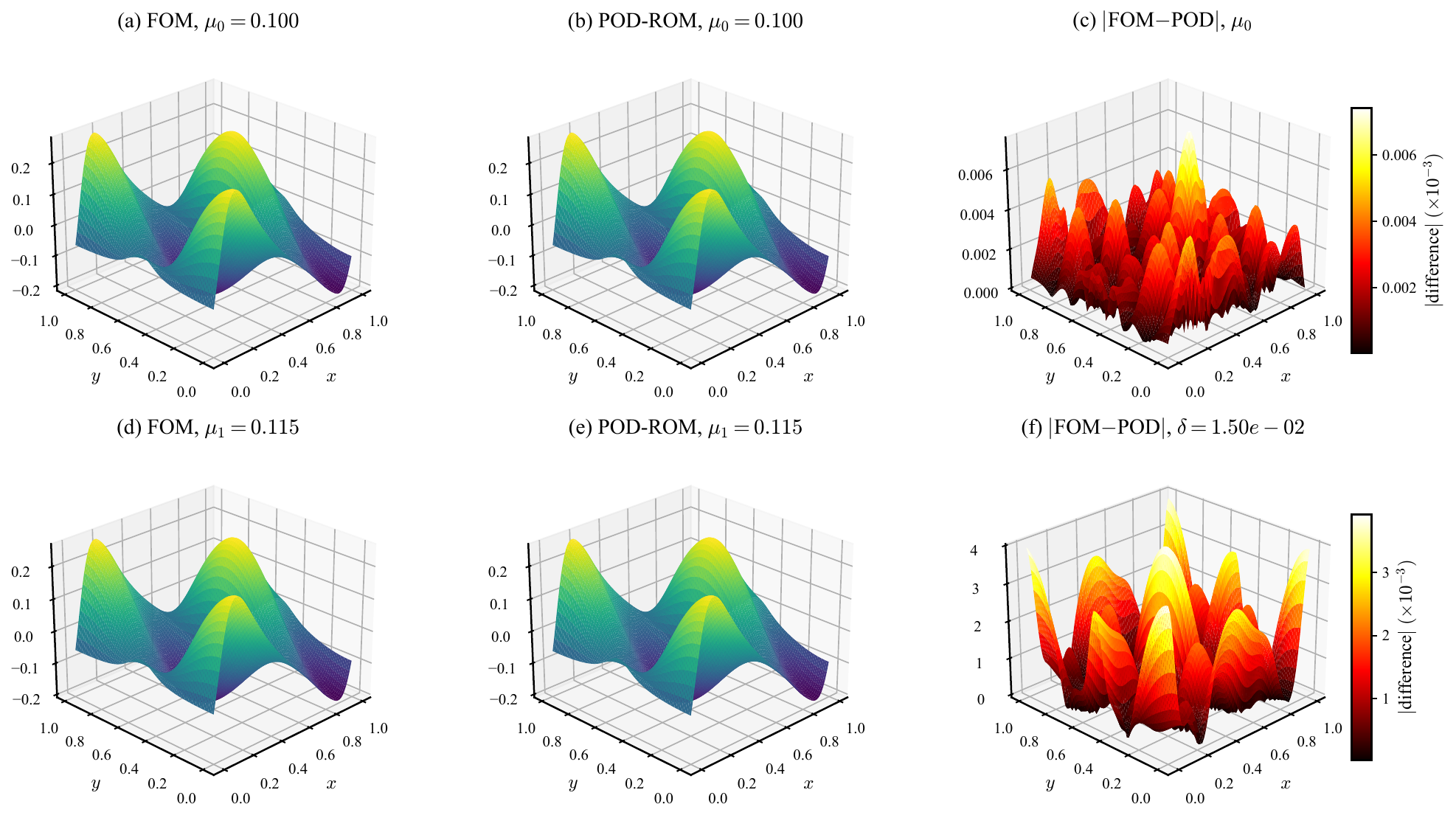}
    \caption{ Comparison of FOM and POD-ROM solutions for the mean-free formulation of the viscous G-equation at $T=1.0$. The top row shows the nominal case with $\mu_0 = 0.100$,  and the bottom row shows the perturbed case with a shift $\mu_1 = 0.115$ . The columns represent the FOM solution, the POD-ROM solution, and the absolute approximation error, respectively.}
    \label{G2}
\end{figure}

\smallskip
\noindent\emph{Nonlinear flow-amplitude parameterization.}
We next test a non-viscosity and nonlinear parameter in the same mean-free viscous
G-equation. In this experiment, the Markstein number is fixed at $d=0.1$ and the parameter \(\mu\) enters through the flow amplitude $A=A(\mu)$. Specifically, we set
$  A(\mu)
    =
    A_0+c_A|\mu-\mu_0|^{1/2}$, 
where  $A_0=4.0$, $\mu_0=1.0$ and $c_A=1.0$. This parameter
controls the strength of the cellular flow and enters both the transport term
\(\vec V_\mu\cdot\nabla\hat u\) and the lower-order term
\(\vec V_\mu\cdot\vec P\).

The results for this nonlinear amplitude test are reported in
\Cref{tab:G_nonlinA_order}.  Unlike the preceding Markstein-number perturbation, the
physical flow amplitude changes according to the nonlinear map
\(A(\mu)\).
Thus the dependence on the abstract parameter \(\mu\) is governed by the
H\"older modulus \(\rho_{\mathcal P}(\delta)=O(\delta^{1/2})\).
The table lists the nine query parameters, the corresponding amplitude
increments, and the POD-ROM errors \(E_{\rm POD}(\mu_1)\). The fitted rate $O(\delta^{0.4598})$  is close to the predicted half-order
scaling. The slight deviation from \(1/2\) is expected, since the reported
quantity is the total POD-ROM error and therefore includes the nonzero
reference-parameter POD error. These data are consistent with the
modulus-controlled estimate and illustrate that the parameter in the abstract theory need not be a viscosity, nor does it need to enter the model linearly.

For this multi-query test, the ten full-order trajectories, consisting of
the reference trajectory and the nine query trajectories listed in
\Cref{tab:G_nonlinA_order}, required
\(T_{\rm FOM}=695.73\) seconds, whereas the corresponding POD-ROM online
solves in the single reference space required \(T_{\rm ROM}=24.19\) seconds, giving an online speedup of about \(28.8\times\).

\begin{table}[t]
\centering
\begingroup
\setlength{\tabcolsep}{2.5pt}
\renewcommand{\arraystretch}{1.08}
\resizebox{\linewidth}{!}{%
\begin{tabular}{@{}l*{9}{c}@{}}
\hline
\(\mu_1\)
& 1.0001 & 1.0002 & 1.0005 & 1.0010 & 1.0020
& 1.0050 & 1.0100 & 1.0200 & 1.0500 \\
\hline
\(\delta=|\mu_1-\mu_0|\)
& \(1.0{\times}10^{-4}\)
& \(2.0{\times}10^{-4}\)
& \(5.0{\times}10^{-4}\)
& \(1.0{\times}10^{-3}\)
& \(2.0{\times}10^{-3}\)
& \(5.0{\times}10^{-3}\)
& \(1.0{\times}10^{-2}\)
& \(2.0{\times}10^{-2}\)
& \(5.0{\times}10^{-2}\) \\

\(A(\mu_1)-A_0\)
& \(1.000{\times}10^{-2}\)
& \(1.414{\times}10^{-2}\)
& \(2.236{\times}10^{-2}\)
& \(3.162{\times}10^{-2}\)
& \(4.472{\times}10^{-2}\)
& \(7.071{\times}10^{-2}\)
& \(1.000{\times}10^{-1}\)
& \(1.414{\times}10^{-1}\)
& \(2.236{\times}10^{-1}\) \\

\(E_{\rm POD}(\mu_1)\)
& \(2.006{\times}10^{-5}\)
& \(2.611{\times}10^{-5}\)
& \(3.838{\times}10^{-5}\)
& \(5.236{\times}10^{-5}\)
& \(7.221{\times}10^{-5}\)
& \(1.116{\times}10^{-4}\)
& \(1.560{\times}10^{-4}\)
& \(2.186{\times}10^{-4}\)
& \(3.418{\times}10^{-4}\) \\

Fitted order \(\widehat p_{\rm POD}\)
& \multicolumn{9}{c}{\(\widehat p_{\rm POD}=0.4598\), predicted order \(p=1/2\)} \\
\hline
\end{tabular}%
}
\endgroup
\caption{
Observed POD-ROM errors for the nonlinear flow-amplitude test in the
mean-free viscous G-equation. 
}
\label{tab:G_nonlinA_order}
\end{table}

\medskip 
\noindent \textbf{Example 3.}  Finally, we apply our framework to the viscous strain G-equation. This strain G-equation is particularly analytically challenging as it involves a highly non-convex Hamilton-Jacobi strain term \cite{LIU201320}. While the standard strain G-equation typically involves curvature-dependent motion, we consider a version where the curvature term is linearized into a Laplacian form $\mu S_l \Delta G$ to provide regular smoothing and numerical stability. We refer to this formulation as the viscous strain G-equation, defined on $\Omega=[0,1]^2$ with periodic boundary conditions:
\begin{equation}\label{eq:strain-G}
\begin{cases}
    G_t + \vec{V} \cdot \nabla G + (S_l + \mu\frac{\nabla G \cdot \nabla \vec{V} \cdot \nabla G}{|\nabla G|^2})|\nabla G| = \mu S_l\Delta G, & \text{in } \Omega \times (0,T], \\
G(\mathbf{x},0) = \vec{P} \cdot \mathbf{x} + f_0(x), & \text{in }   \Omega,
\end{cases}
\end{equation}
where $S_l=1.0$ is the laminar flame speed and $\mu$ represents the Markstein number serving as our interested parameter, $f_0(\mathbf{x}) = 0.1 \sin(2\pi x) \cos(2\pi y) + 0.05 \cos(4\pi x) \sin(4\pi y)$. The velocity field is a cellular flow given by $\vec{V} = A(-\cos(2\pi x)\sin(2\pi y), \sin(2\pi x)\cos(2\pi y))^\top$ with amplitude $A = 1.0$ and propagation direction $\vec{P} = (1,0)$.

Following the same mean-free decomposition used in Example 2, we reformulate the problem to align with our theoretical assumptions. This leads to the evolution equation for the zero-mean component \(\hat u\):
\begin{equation*}
\hat{u}_t + \vec{V} \cdot \nabla \hat{u}  + F_{strain}(\hat{u}; \mu) +\vec{V} \cdot \vec{P} = \mu S_l\Delta \hat{u},
\end{equation*}
where the nonlinear strain operator is defined as:
\begin{equation*}
F_{strain}(\hat{u}; \mu) = \sigma(\hat{u}; \mu)|\vec{P} + \nabla \hat{u}| - \int_{\Omega} \sigma(\hat{u}; \mu)|\vec{P} + \nabla \hat{u}| \diff\mathbf{x},
\end{equation*}
with the strain-dependent flame speed:
\begin{equation*}
\sigma(\hat{u}; \mu) = S_l + \mu\frac{(\vec{P}+\nabla \hat{u}) \cdot \nabla \vec{V} \cdot (\vec{P}+\nabla \hat{u})}{|\vec{P}+\nabla \hat{u}|^2}.
\end{equation*}
Note that now the initial condition $\hat u (\mathbf{x},0) = 0.1 \sin(2\pi x) \cos(2\pi y) + 0.05 \cos(4\pi x) \sin(4\pi y)$ is naturally mean-free. The numerical discretization remains consistent with the viscous G-equation. The POD basis is constructed at the nominal parameter $\mu_0= 0.05$ with $\ell = 15$ modes, capturing over 99.9\% of the system energy.

The numerical results are summarized in Figures \ref{fig:strainG2} and \ref{fig:strainG1}. As shown in Figure \ref{fig:strainG2}, the POD-ROM error exhibits a slope consistent with the $\rho_{\mathcal P}(\delta)=\mathcal{O}(\delta)$ convergence rate predicted by our main theorem, even in the presence of non-convex strain nonlinearities. Figure \ref{fig:strainG1} demonstrates that the POD basis constructed at the nominal parameter remains effective for the perturbed system, even as the strain term significantly modifies the flame front propagation. 

\begin{figure}[h]
    \centering
    \includegraphics[width=0.5\linewidth]{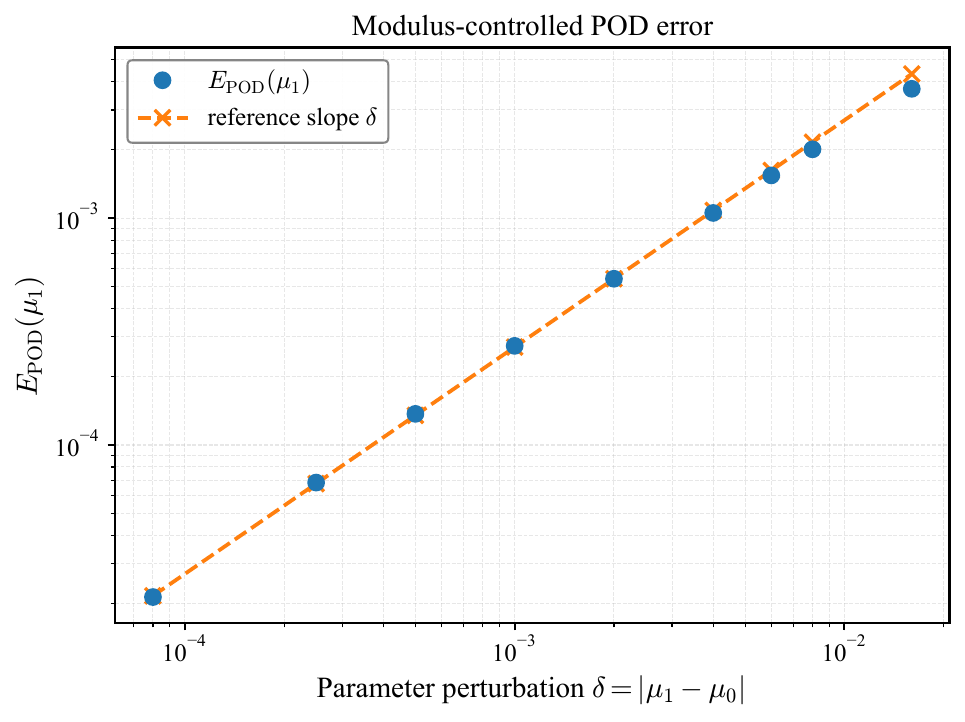}
    \caption{ Modulus-controlled POD error for the mean-free viscous strain G-equation.
    Blue markers show \(E_{\rm POD}(\mu_1)\) versus
    \(\delta=|\mu_1-\mu_0|\). The dashed line is a
    reference slope proportional to \(\delta\), corresponding to the
    Lipschitz-type modulus \(\rho_{\mathcal P}(\delta)=O(\delta)\).}
    \label{fig:strainG2}
\end{figure}

\begin{figure}[h]
    \centering
    \includegraphics[width=0.8\linewidth]{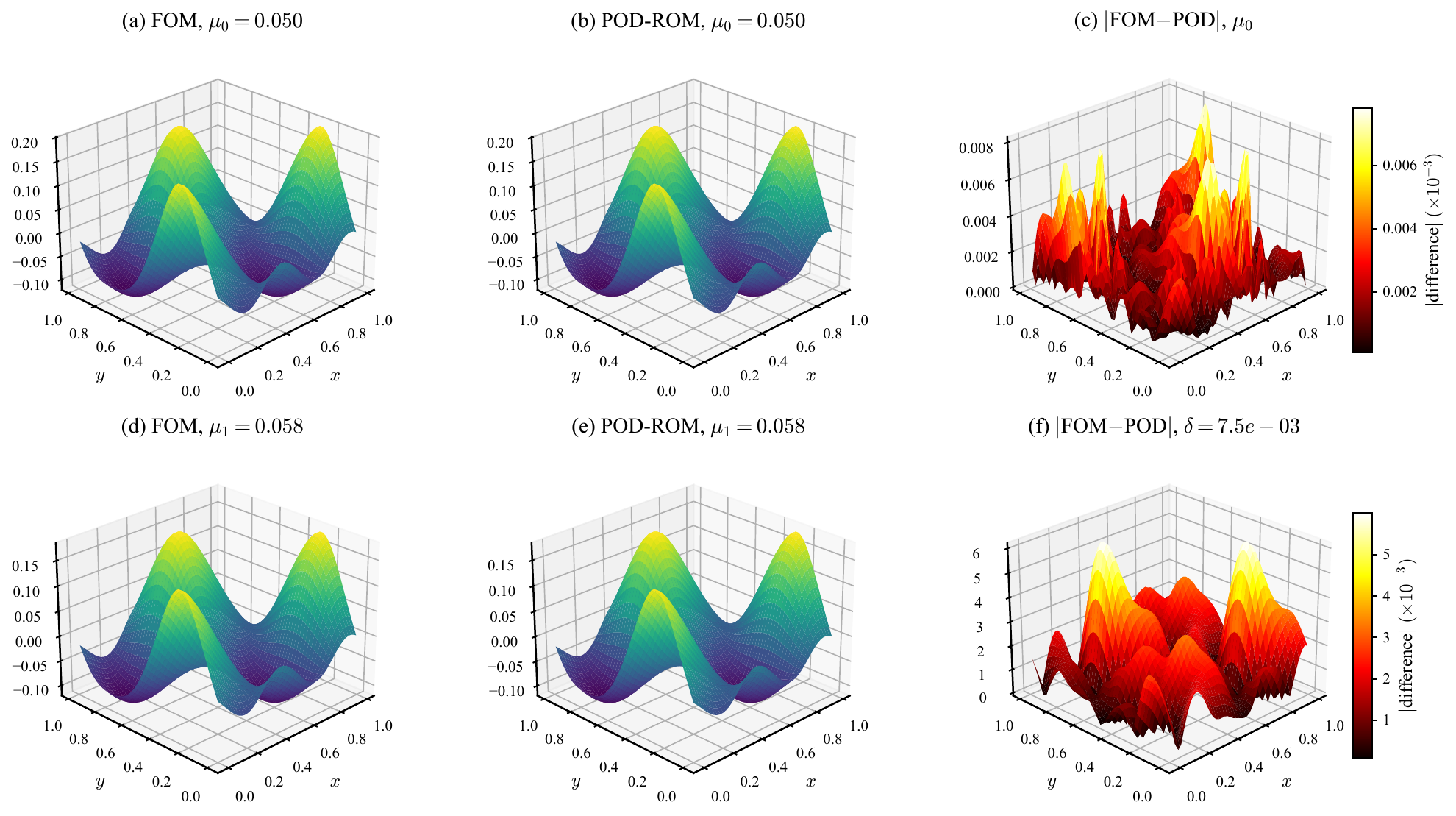}
    \caption{Comparison of the full-order and reduced-order solutions for the mean-free viscous strain G-equation at $T=1.0$. The top row shows the nominal case with $\mu_0 = 0.050$, and the bottom row shows the perturbed case with $\mu_1 = 0.058$. The columns represent the FOM solution, the POD-ROM solution, and the absolute approximation error, respectively.}
    \label{fig:strainG1}
\end{figure}

\section{Conclusions}
\label{sec:conclusions}
In this work, we developed a sensitivity analysis for POD reduced-order models applied to a class of parameterized semilinear evolution equations. The analysis is based on an abstract formulation involving a parameter-dependent bilinear form and a nonlinear operator satisfying a cross-space Lipschitz condition in the state variable. This cross-space condition is particularly useful for the gradient-dependent nonlinearities appearing in viscous G-equations and viscous strain G-equations. The parameter dependence is quantified through continuity moduli, which allows the final estimates to reflect the specific way in which the parameter enters each model. Within this setting, we showed that a POD basis constructed at a reference parameter can be reused at nearby query parameters, with the additional error controlled by the corresponding parameter modulus. The reference-parameter POD error estimate is obtained with constants independent of the POD dimension and time discretizations.

The numerical experiments are consistent with these theoretical predictions.  We tested the method on a convection-diffusion equation, the mean-free viscous G-equation, and the mean-free viscous strain G-equation. In the diffusion and Markstein-number perturbation tests, the observed POD-ROM errors are consistent with Lipschitz-type parameter dependence. The additional flow-amplitude test uses a non-viscosity parameter with a nonlinear parameterization, and the observed scaling follows the corresponding modulus. This observed agreement supports the modulus-dependent behavior predicted by the theory (Theorem \ref{thm:main theorem}) and demonstrates that a pre-computed POD basis can be robustly deployed within a local parameter neighborhood without unpredictable accuracy loss.

Some interesting directions of future work include extending the analysis to broader equation classes, such as the curvature G-equation \cite{LIU201320,OSHER198812}. Furthermore, our rigorous error bounds offer a solid mathematical basis for developing adaptive POD algorithms \cite{WU2022114498} that leverage these sensitivity estimates for dynamic basis updates. Finally, incorporating stochastic coefficients into the current framework will be an essential step to handle parametric uncertainty in applications such as turbulent combustion and climate modeling.

\appendix
\section{Verification of Theoretical Assumptions}
\label{section: Verification}

In this section, we verify that the numerical examples considered in
\Cref{section: numerical part} satisfy the structural assumptions used in
\Cref{Problem Settings}. This includes the continuity, coercivity, and
parameter-continuity conditions for the bilinear forms, as well as the
cross-space Lipschitz and parameter-continuity estimates for the nonlinear
operators.

For Example 1, we take \(H=L^2(\Omega)\) and \(V=H_0^1(\Omega)\). For Examples 2 and 3, after passing to the mean-free formulation, we take \[ H=L_0^2(\Omega) := \left\{ v\in L^2(\Omega): \int_\Omega v\,dx=0 \right\}, \qquad V=\mathring H_{\rm per}^1(\Omega) := H_{\rm per}^1(\Omega)\cap L_0^2(\Omega). \] The inner product on \(H\) is the usual \(L^2\)-inner product restricted to \(L_0^2(\Omega)\).  In all cases, $V$ is equipped with the norm $\norm{u}_V = \left( \norm{u}_{L^2(\Omega)}^2 + \norm{\nabla u}_{L^2(\Omega)}^2 \right)^{1/2}$.

\subsection{Verification of Bilinear Form Conditions (A1)--(A3)}
For the examples considered in this paper, the linear transport and diffusion terms are included in the parameter-dependent bilinear form. More precisely,
we write
\begin{equation}
    a(\varphi,\psi;\mu)
    =
    \int_\Omega (B_\mu\cdot\nabla\varphi)\psi\,\diff\mathbf{x}
    +
    D_\mu
    \int_\Omega \nabla\varphi\cdot\nabla\psi\,\diff\mathbf{x},
\end{equation}
where \(B_\mu\) is the prescribed incompressible transport field and
\(D_\mu>0\) is the diffusion coefficient. For the convection--diffusion
equation in Example 1, \(B_\mu=\mathbf b\) and \(D_\mu=\mu\). For the
Markstein-number perturbation in the mean-free viscous G-equation,
\(B_\mu=A\vec V_0\) and \(D_\mu=\mu S_l\). For the nonlinear flow-amplitude
test in the same viscous G-equation, \(B_\mu=A(\mu)\vec V_0\) and
\(D_\mu=dS_l\), where \(d\) is fixed and
    $A(\mu)=A_0+c_A|\mu-\mu_0|^\gamma, 0<\gamma<1$.
For the viscous strain G-equation, \(B_\mu=\vec V\) and \(D_\mu=\mu S_l\).

\noindent \textbf{Continuity (A1) and parameter continuity (A3):}
By applying the Cauchy--Schwarz inequality, we obtain
\begin{equation}
\begin{aligned}
    \abs{a(\varphi,\psi;\mu)}
    &\le
    \|B_\mu\|_{L^\infty(\Omega)}
    \|\nabla\varphi\|_{L^2(\Omega)}
    \|\psi\|_{L^2(\Omega)}
    +
    D_\mu
    \|\nabla\varphi\|_{L^2(\Omega)}
    \|\nabla\psi\|_{L^2(\Omega)}
    \\
    &\le
    \left(
        \|B_\mu\|_{L^\infty(\Omega)}+D_\mu
    \right)
    \|\varphi\|_V\|\psi\|_V .
\end{aligned}
\end{equation}
Thus condition (A1) is satisfied with $\beta(\mu)
    =
    \|B_\mu\|_{L^\infty(\Omega)}+D_\mu$.

Similarly, for \(\mu,\nu\in\mathcal P\), 
$    \abs{a(\varphi,\psi;\mu)-a(\varphi,\psi;\nu)}
    \le
    \left(
        \|B_\mu-B_\nu\|_{L^\infty(\Omega)}
        +
        |D_\mu-D_\nu|
    \right)
    \|\varphi\|_V\|\psi\|_V .$ So (A3) holds with the modulus
\begin{equation}\label{eq:omega-a-general}
    \omega_a^{\mathcal P}(s)
    :=
    \sup_{\substack{\mu,\nu\in\mathcal P\\ |\mu-\nu|\le s}}
    \left(
        \|B_\mu-B_\nu\|_{L^\infty(\Omega)}
        +
        |D_\mu-D_\nu|
    \right),
    \qquad 0\le s\le \operatorname{diam}(\mathcal P).
\end{equation}
For \(s>\operatorname{diam}(\mathcal P)\), we extend
\(\omega_a^{\mathcal P}\) by setting
$    \omega_a^{\mathcal P}(s)
    =
    \omega_a^{\mathcal P}(\operatorname{diam}(\mathcal P))$.
Since the maps
$    \mu\mapsto B_\mu\in L^\infty(\Omega),
    \mu\mapsto D_\mu\in\mathbb R$
are continuous on the compact set \(\mathcal P\), they are uniformly
continuous. Hence \(\omega_a^{\mathcal P}\) is nondecreasing,
\(\omega_a^{\mathcal P}(0)=0\), and $\lim_{s\to0}\omega_a^{\mathcal P}(s)=0$.
Moreover, for every \(\mu,\nu\in\mathcal P\),
\[
    \|B_\mu-B_\nu\|_{L^\infty(\Omega)}
    +
    |D_\mu-D_\nu|
    \le
    \omega_a^{\mathcal P}(|\mu-\nu|),
\]
which gives (A3).

In particular, the convection--diffusion equation admits the Lipschitz-type
choice \(\omega_a^{\mathcal P}(s)=s\). The Markstein-number perturbation in
the viscous G-equation and the parameter perturbation in the viscous strain
G-equation both admit the Lipschitz-type choice
\(\omega_a^{\mathcal P}(s)=S_l s\). For the nonlinear flow-amplitude test,
the amplitude satisfies
$  |A(\mu)-A(\nu)|
    \le
    c_A|\mu-\nu|^\gamma$,
and hence we can take
$   \omega_a^{\mathcal P}(s)
    =
    c_A\|\vec V_0\|_{L^\infty(\Omega)}s^\gamma $.

\noindent \textbf{Coercivity (A2):}
Setting \(\varphi=\psi\), we obtain
    $a(\psi,\psi;\mu)
    =
    \int_\Omega (B_\mu\cdot\nabla\psi)\psi\,\diff\mathbf{x}
    +
    D_\mu\norm{\nabla\psi}_{L^2(\Omega)}^2$.

Since \(B_\mu\) is incompressible, the transport term is skew-symmetric in the
energy estimate. Indeed,
\begin{equation}
    \int_\Omega (B_\mu\cdot\nabla\psi)\psi\,\diff\mathbf{x}
    =
    \frac12
    \int_\Omega B_\mu\cdot\nabla(\psi^2)\,\diff\mathbf{x}
    =
    0,
\end{equation}
where the boundary contribution vanishes either by the homogeneous Dirichlet
condition in Example 1 or by periodicity in the G-equation examples. Hence
    $a(\psi,\psi;\mu)
    =
    D_\mu\norm{\nabla\psi}_{L^2(\Omega)}^2 $.

For Example 1, the standard Poincar\'e inequality gives
\[
    \norm{\psi}_{L^2(\Omega)}^2
    \le
    C_P\norm{\nabla\psi}_{L^2(\Omega)}^2,
    \qquad
    \psi\in H_0^1(\Omega).
\]
For Examples 2 and 3, since \(\psi\in\mathring H_{\rm per}^1(\Omega)\) is
mean-free, the Poincar\'e--Wirtinger inequality gives the analogous bound.
Therefore,
\begin{equation}
    \norm{\psi}_V^2
    =
    \norm{\psi}_{L^2(\Omega)}^2
    +
    \norm{\nabla\psi}_{L^2(\Omega)}^2
    \le
    (C_P+1)\norm{\nabla\psi}_{L^2(\Omega)}^2 .
\end{equation}
Substituting this relation into the bilinear form gives
$    a(\psi,\psi;\mu)
    \ge
    \frac{D_\mu}{C_P+1}
    \norm{\psi}_V^2$.
Thus condition (A2) is satisfied with
    $\kappa(\mu)
    =
    \frac{D_\mu}{C_P+1}>0$.

\subsection{Verification of Nonlinear Operator Conditions (F1)--(F2)}
In this subsection, we verify the cross-space Lipschitz condition (F1) and the parameter-continuity condition (F2) for the nonlinear operators appearing in the numerical examples.
\\

\noindent \textbf{Example 1: Convection-Diffusion Equation.}  For the linear convection-diffusion equation \eqref{convection-diffusion}, the nonlinear operator vanishes identically, meaning $F(u; \mu) \equiv 0$. Consequently, both  conditions (F1)--(F2) hold trivially with $L_F(\mu)=0,  \omega_F^{\mathcal P}\equiv0$.
\\

\noindent \textbf{Example 2: Viscous G-Equation.}
For the mean-free viscous G-equation \eqref{meanfree G}, after the transport
and diffusion terms are included in the bilinear form, the remaining operator
can be written as
\[
    F(u;\mu)
    =
    E(u)
    -
    \int_{\Omega} E(u)\diff\mathbf{x}
    +
    B_\mu\cdot\vec P,
    \qquad
    E(u)=S_l\abs{\vec P+\nabla u}.
\]
Here \(B_\mu=A\vec V_0\) for the Markstein-number perturbation, while
\(B_\mu=A(\mu)\vec V_0\) for the nonlinear flow-amplitude test. To verify condition (F1), let \(u,v\in V\). For fixed \(\mu\), the term
\(B_\mu\cdot\vec P\) cancels in \(F(u;\mu)-F(v;\mu)\). By the reverse triangle
inequality,
$   \abs{E(u)-E(v)}
    \le
    S_l\abs{\nabla(u-v)}$.
    
Taking the \(H\)-norm of the difference \(F(u;\mu)-F(v;\mu)\) and using the
triangle inequality gives
\begin{equation}
    \norm{F(u;\mu)-F(v;\mu)}_H
    \le
    \norm{E(u)-E(v)}_H
    +
    \norm{
        \int_{\Omega} \left(E(u)-E(v)\right)\diff\mathbf{x}
    }_H .
\end{equation}
Since \(\abs{\Omega}=1\), the Cauchy--Schwarz inequality gives
\begin{align}
    \norm{F(u;\mu)-F(v;\mu)}_H
    &\le
    2\norm{E(u)-E(v)}_H \nonumber\le
    2S_l\norm{\nabla(u-v)}_{L^2(\Omega)}
    \le
    2S_l\norm{u-v}_V .
\end{align}
Thus (F1) holds with \(L_F(\mu)\equiv2S_l\).

For the Markstein-number perturbation, \(B_\mu=A\vec V_0\) is independent of
\(\mu=d\), and hence \(F\) is independent of the parameter. Therefore (F2)
holds with $\omega_F^{\mathcal P}\equiv0$.

For the nonlinear flow-amplitude test, \(B_\mu=A(\mu)\vec V_0\), where
$A(\mu)=A_0+c_A|\mu-\mu_0|^{\gamma}, 0< \gamma < 1$. For fixed \(u\in V\), the only parameter-dependent part of \(F(u;\mu)\) is
\(B_\mu\cdot\vec P\). Hence
\begin{align}
    \norm{F(u;\mu)-F(u;\nu)}_H
    &=
    \norm{(B_\mu-B_\nu)\cdot\vec P}_H \nonumber\le
    |A(\mu)-A(\nu)|
    \norm{\vec V_0\cdot\vec P}_H \nonumber \le
    c_A\norm{\vec V_0\cdot\vec P}_H
    |\mu-\nu|^\gamma .
\end{align}
Thus (F2) holds with the admissible modulus $\omega_F^{\mathcal P}(s)
    =
    c_A\norm{\vec V_0\cdot\vec P}_H s^\gamma $.
\\

\noindent \textbf{Example 3: Viscous Strain G-Equation.}
For the mean-free viscous strain G-equation, let
\(q_u:=\vec P+\nabla u\). The nonlinear term entering
\(F_{\rm strain}\) is the product
\begin{equation}
        \sigma(u;\mu)|q_u|
    =
    S_l|q_u|
    +
    \mu
    \frac{q_u\cdot\nabla\vec V\cdot q_u}{|q_u|},
\end{equation}
where the quotient is understood as zero whenever \(q_u=0\). This is the
continuous extension of the product
\(\sigma(u;\mu)|q_u|\) at such points.

We first record the following pointwise estimate. It can be proved by simple expansion and calculation. For any
\(p,q\in\mathbb R^2\),
\begin{equation}\label{eq:strain-quotient-lipschitz}
    \left|
        \frac{p\cdot\nabla\vec V\cdot p}{|p|}
        -
        \frac{q\cdot\nabla\vec V\cdot q}{|q|}
    \right|
    \le
    3\norm{\nabla\vec V}_{L^\infty(\Omega)}|p-q|.
\end{equation}

\textbf{Verification of (F1).}
Fix \(\mu\in\mathcal P\) and let \(u,v\in V\). Set
\(q_u:=\vec P+\nabla u\) and \(q_v:=\vec P+\nabla v\). By the reverse
triangle inequality and \eqref{eq:strain-quotient-lipschitz},
\[
\begin{aligned}
    \left|
        \sigma(u;\mu)|q_u|
        -
        \sigma(v;\mu)|q_v|
    \right|
    &\le
    S_l\big||q_u|-|q_v|\big|
    +
    \mu
    \left|
        \frac{q_u\cdot\nabla\vec V\cdot q_u}{|q_u|}
        -
        \frac{q_v\cdot\nabla\vec V\cdot q_v}{|q_v|}
    \right|
\\&\le
    \left(
        S_l
        +
        3\mu\norm{\nabla\vec V}_{L^\infty(\Omega)}
    \right)
    |\nabla(u-v)|.
\end{aligned}
\]
Since \(|\Omega|=1\), the mean-subtraction term satisfies
\[
    \left\|
        \int_\Omega
        \left[
            \sigma(u;\mu)|q_u|
            -
            \sigma(v;\mu)|q_v|
        \right]
        \diff\mathbf x
    \right\|_{L^2(\Omega)}
    \le
    \left\|
        \sigma(u;\mu)|q_u|
        -
        \sigma(v;\mu)|q_v|
    \right\|_{L^2(\Omega)} .
\]
Therefore,
\[
\begin{aligned}
    \norm{F_{\rm strain}(u;\mu)-F_{\rm strain}(v;\mu)}_H
    &\le
    2
    \left\|
        \sigma(u;\mu)|q_u|
        -
        \sigma(v;\mu)|q_v|
    \right\|_{L^2(\Omega)}
    \\
    &\le
    2
    \left(
        S_l
        +
        3\mu\norm{\nabla\vec V}_{L^\infty(\Omega)}
    \right)
    \norm{\nabla(u-v)}_{L^2(\Omega)}
    \\
    &\le
    2
    \left(
        S_l
        +
        3\mu\norm{\nabla\vec V}_{L^\infty(\Omega)}
    \right)
    \norm{u-v}_V .
\end{aligned}
\]
Thus (F1) holds with 
$L_F(\mu)
    =
    2
    \left(
        S_l
        +
        3\mu\norm{\nabla\vec V}_{L^\infty(\Omega)}
    \right)$.

\textbf{Verification of (F2).}
For fixed \(u\in V\), the dependence of
\(\sigma(u;\mu)|q_u|\) on \(\mu\) is affine. Hence
\[
    \left|
        \sigma(u;\mu)|q_u|
        -
        \sigma(u;\nu)|q_u|
    \right|
    =
    |\mu-\nu|
    \left|
        \frac{q_u\cdot\nabla\vec V\cdot q_u}{|q_u|}
    \right|
    \le
    |\mu-\nu|
    \norm{\nabla\vec V}_{L^\infty(\Omega)}
    |q_u|.
\]
Since \(q_u=\vec P+\nabla u\), we have
\(|q_u|\le |\vec P|+|\nabla u|\). Using the same mean-subtraction estimate
as above, we obtain
\[
\begin{aligned}
    \norm{F_{\rm strain}(u;\mu)-F_{\rm strain}(u;\nu)}_H
    &\le
    2|\mu-\nu|
    \left\|
        \frac{q_u\cdot\nabla\vec V\cdot q_u}{|q_u|}
    \right\|_{L^2(\Omega)}
    \\
    &\le
    2\norm{\nabla\vec V}_{L^\infty(\Omega)}
    |\mu-\nu|
    \left(
        \norm{\nabla u}_{L^2(\Omega)}
        +
        |\vec P|
    \right)
    \\
    &\le
    C_{\rm strain}|\mu-\nu|
    \left(
        \norm{u}_V+1
    \right),
\end{aligned}
\]
where $C_{\rm strain}
    :=
    2\norm{\nabla\vec V}_{L^\infty(\Omega)}
    \max\{1,|\vec P|\}$.
Therefore (F2) holds with the Lipschitz-type modulus
$    \omega_F^{\mathcal P}(s)
    =
    C_{\rm strain}s $.

\section*{Acknowledgments}
We would like to thank the research computing facilities of the Information Technology Services, the University of Hong Kong.

\bibliographystyle{siamplain}
\bibliography{references}
\end{document}